\newtheorem{theorem}{Theorem}[section]
\newtheorem{lemma}[theorem]{Lemma}
\newtheorem{proposition}[theorem]{Proposition}
\newtheorem{remark}[theorem]{Remark}
\numberwithin{equation}{section}
\DeclareMathOperator*{\diag}{diag}
\DeclareMathOperator*{\Arg}{Arg}
\newcommand{\e}{\mathrm{e}}
\newcommand{\dd}{\mathrm{d}}
\newcommand{\ii}{\mathrm{i}}
\newcommand{\Rset}{\mathbb{R}}
\newcommand{\Zset}{\mathbb{Z}}
\begin{document}

\title{On spectral assignment for neutral type systems}
\author{K.~V.~Sklyar}
\address{Institute of Mathematics, University of Szczecin\\
Wielkopolska 15, 70-451 Szczecin, Poland}
\curraddr{}
\email{sklar@univ.szczecin.pl}
\thanks{This work was 
supported in part by the Polish Nat. Sci. Center, grant
N N514 238 438}
\author{R.~Rabah}
\address{IRCCyN/\'Ecole des Mines de Nantes\\
4 rue Alfred Kastler, BP 20722, 44307 Nantes, France}
\curraddr{}
\email{rabah.rabah@mines-nantes.fr}
\thanks{}
\author{G.~M.~Sklyar}
\address{Institute of Mathematics, University of Szczecin\\
Wielkopolska 15, 70-451 Szczecin, Poland}
\curraddr{}
\email{sklar@univ.szczecin.pl}
\thanks{}
\subjclass[2010]{Primary 93C23, Secondary 93B60, 93B55}
\keywords{Neutral type systems, eigenvalue assignment, eigenvector assignment}

\date{\today}

\dedicatory{}

\begin{abstract}
For a large class of linear neutral type systems the problem of eigenvalues and eigenvectors assignment  
is investigated, i.e. finding the system which has the given spectrum and   almost all, in some sense,
 eigenvectors.
\end{abstract}

\maketitle
\section{Introduction}
One of central problems in control theory is the spectral assignment problem. This question is well
 investigated for linear finite dimensional systems. It is important to emphasize that 
 the assignment of eigenvalues is not sufficient in several cases. One needs also the 
 assignment of eigenvectors or of the geometric eigenstructure. For
 infinite dimensional problems (delay systems, partial derivative equations) the problem is much more complicated.
 
 Our purpose is to investigate this kind of problems for a large class of neutral type systems given by the equation
 \begin{equation}
\label{rownanie1.1}
\dot{z}(t)=A_{-1}\dot{z}(t-1)
+\int^{0}_{-1}A_2(\theta)\dot{z}(t+\theta)\dd\theta
+\int^{0}_{-1}A_3(\theta)z(t+\theta)\dd\theta,
\end{equation}
where $z(t) \in \Rset^n$ and $A_{-1}, A_2, A_3$ are $n\times n$ matrices. The elements of $A_2$ and $A_3$ taking 
values in $L_2(-1,0)$. The neutral type term $A_{-1}\dot{z}(t-1)$ consists on a simple delay, while the other include
as multiple as distributed delays. The behavior of such systems can be described mainly by the algebraic and 
geometric properties of the spectrum of the matrix $A_{-1}$ 
(cf. \cite{Rabah_Sklyar_Rezounenko_2003,Rabah_Sklyar_Rezounenko_2005}).

It is well known that then spectral properties of this system are described by the characteristic matrix $\Delta(\lambda)$ 
given by 
$$
\Delta(\lambda)=
\lambda
I-\lambda \e^{-\lambda}A_{-1}-\int^{0}_{-1}\lambda \e^{{\lambda}\theta}A_2(\theta)\dd\theta-
\int^{0}_{-1}\e^{{\lambda}\theta}A_3(\theta)\dd\theta.
$$
The eigenvalues are roots of the equation $\det\Delta(\lambda)=0$. The eigenvectors of the system (more precisely of the functional operator model of the system) are expressed through the matrix $\Delta(\lambda_k)$, where 
$\lambda_k$ is an eigenvalue. In fact the problem of an assignment of an infinite number 
of eigenvalues and eigenvectors
is reduced to a problem of assignment of singular values and degenerating vectors of 
an entire matrix value function $\Delta(\lambda)$. It is remarkable \cite{Rabah_Sklyar_Rezounenko_2005} 
that the roots of
$\det\Delta(\lambda)=0$ are quadratically close to a fixed set of complex number which 
are the logarithm of eigenvalues of the matrix $A_{-1}$. Moreover, the degenerating vectors of 
$\Delta(\lambda_k)$ are also quadratically close to the eigenvectors of the matrix $A_{-1}$.

In this paper we investigate an inverse problem:\\[1.5ex]
\textit{What conditions must satisfy a sequence of complex numbers $\{\lambda\}$ and a sequence 
of vectors $\{v\}$ in order to be a sequence of roots of the characteristic 
equation $\det\Delta (\lambda)=0$ and a sequence of
degenerating vectors of the characteristic matrix $\Delta(\lambda)$ of 
equation \eqref{rownanie1.1} respectively for some choice of matrices $A_{-1},A_{2}(\theta), A_{3}(\theta)$ ?}\\[1.5ex]
One of the possible application of this problem is to investigate a vector moment problem
via the solution of the exact controllability property for a corresponding neutral type system 
by a relation devlopped in \cite{Rabah_Sklyar_2007}.\\[2ex]
The present paper is a detailed version of the short note published in Comptes Rendus Mathematiques 
\cite{Sklyar_Rabah_Sklyar_2013}. 
\section{Operator form of perturbation}
We consider neutral type systems of the form 
\begin{equation}
\label{eq:1}
\dot{z}(t)=A_{-1}\dot{z}(t-1)
+\int^{0}_{-1}A_2(\theta)\dot{z}(t+\theta)\dd\theta
+\int^{0}_{-1}A_3(\theta)z(t+\theta)\dd\theta,
\end{equation}
where $A_{-1}$ is a constant $n\times n$ matrix, $A_{2},A_{3}$ are
$n\times n$ matrices
whose elements belong to $L_{2}[-1,0].$

As it is shown in \cite{Rabah_Sklyar_Rezounenko_2003}, \cite{Rabah_Sklyar_Rezounenko_2005} this system can be rewritten in the operator form
$$
\frac{\dd}{\dd t}\binom{y}{z_t(\cdot)}=\mathcal{A}\binom{y}{z_t(\cdot)},
$$
where $\mathcal{A} : D(\mathcal{A})\to M_{2}=\mathbb{C}^n\times L_{2}([-1,0],\mathbb{C}^n)$,
$$
D(\mathcal{A})=\left\{\binom{y}{\varphi(\cdot)}\mid \varphi(\cdot)\in H^1([-1,0],\mathbb{C}^n),\
y=\varphi(0)-A_{-1}\varphi(-1)\right\}\subset M_2,
$$ 
and the operator $\mathcal{A}$ is given by formula
$$
\mathcal{A}\binom{y}{\varphi(\cdot)}=\binom{\int^{0}_{-1}A_2(\theta)\dot{\varphi}(\theta)\dd\theta
+\int^{0}_{-1}A_3(\theta)\varphi(\theta)\dd\theta}{\frac{\dd\varphi}{\dd\theta}(\cdot)}.
$$
This operator is noted $\widetilde{\mathcal{A}}$ instead of $\mathcal{A}$ if $A_2(\theta)=A_3(\theta)\equiv0$.
The operator $\widetilde{\mathcal{A}}$ is defined on the same domain $D({\mathcal{A}})$. 
One can consider that
the state operator ${\mathcal{A}}$ is a perturbation of the operator $\widetilde{\mathcal{A}}$, namely
$$
\mathcal{A}\binom{y}{\varphi(\cdot)}=\widetilde{\mathcal{A}}\binom{y}{\varphi(\cdot)}+ 
\binom{\int^{0}_{-1}A_2(\theta)\dot{\varphi}(\theta)\dd\theta
+\int^{0}_{-1}A_3(\theta)\varphi(\theta)\dd\theta}{0}.
$$
Let $\mathcal{B}_0:\mathbb{C}^n\to M_2$ be given by
$$\mathcal{B}_0y=\binom{y}{0},$$
and $\mathcal{P}_0:D(\mathcal{A})\to\mathbb{C}^n$ by
\begin{equation}
\label{rownanie1.2}
\mathcal{P}_0\binom{y}{\varphi(\cdot)}=\int^{0}_{-1}A_2(\theta)\dot{\varphi}(\theta)\dd\theta
+\int^{0}_{-1}A_3(\theta)\varphi(\theta)\dd\theta.
\end{equation}
Then $\mathcal{A}=\widetilde{\mathcal{A}}+\mathcal{B}_0\mathcal{P}_0$. 
Denote by $X_{\mathcal{A}}$  the set $D(\mathcal{A})$ endowed with the graph norm.
Let us show that $\mathcal{P}_0$ browses the set of all linear bounded operators
$\mathcal{L}(X_{\mathcal{A}},\mathbb{C}^n)$ as $A_2(\cdot),A_3(\cdot)$ run over the set of
$n\times n$ matrices with components from $L_2[-1,0].$ 
Indeed, an arbitrary linear operator $Q$ from
$\mathcal{L}(X_{\mathcal{A}},\mathbb{C}^n)$ can be presented as
\begin{eqnarray*}
Q\binom{y}{\varphi(\cdot)}& = &Q_1y +Q_2{\varphi(\cdot)}\\
&=&Q_1(\varphi(0)-A_{-1}\varphi(-1))+ \int^{0}_{-1}\widehat{A}_2(\theta)\dot{\varphi}(\theta)\dd\theta
+\int^{0}_{-1}\widehat{A}_3(\theta)\varphi(\theta)\dd\theta,
\end{eqnarray*}
where $\widehat{A}_2(\cdot),\widehat{A}_3(\cdot)$ are $(n\times n)$-matrices with 
component from $L_2[-1,0]$ and $Q_1$ is a $(n\times n)$ matrix.
Let us observe that
\begin{eqnarray*}
\varphi(-1)&=&\int^{0}_{-1}\theta\dot{\varphi}(\theta)\dd\theta+\int^{0}_{-1}\varphi(\theta)\dd\theta,\cr
\varphi(0)&=&\int^{0}_{-1}(\theta+1)\dot{\varphi}(\theta)\dd\theta+\int^{0}_{-1}\varphi(\theta)\dd\theta 
\end{eqnarray*}
and denote
\begin{eqnarray*}
A_2(\theta)&=&\widehat{A}_2(\theta)+(\theta+1)Q_1-\theta Q_1A_{-1},\cr
A_3(\theta)&=&\widehat{A}_3(\theta)+Q_1-Q_1A_{-1}.
\end{eqnarray*}
Then, with these notations, the operator $Q$ may be written as
$$Q\binom{y}{\varphi(\cdot)}=\int^{0}_{-1}A_2(\theta)\dot{\varphi}(\theta)\dd\theta+
\int^{0}_{-1}A_3(\theta){\varphi}(\theta)\dd\theta.$$
Hence formula \eqref{rownanie1.2} describes all the operators from $\mathcal{L}(X_{\mathcal{A}},\mathbb{C}^n).$

\section{An equation for eigenvalues and eigenvectors of the characteristic matrix (spectral equation)}

Consider the operator $\mathcal{A}=\widetilde{\mathcal{A}}+\mathcal{B}_0\mathcal{P}_0$
and assume that $\lambda_0$ is an eigenvalue of $\mathcal{A}$ and $x_0$ is a corresponding eigenvector, i.e.
\begin{equation}
\label{rownanie2.1}
(\widetilde{\mathcal{A}}+\mathcal{B}_0\mathcal{P}_0)x_0=\lambda_0 x_0.
\end{equation}
Let us assume further that $\lambda_0$ does not belong to spectrum of
 $\widetilde{\mathcal{A}}$ and denote by $R(\widetilde{\mathcal{A}},\lambda_{0})=(\widetilde{\mathcal{A}}-\lambda_{0}I)^{-1},$
with this notation \eqref{rownanie2.1} reads as
\begin{equation}
\label{rownanie 2.2}
x_0+R(\widetilde{\mathcal{A}},\lambda_0)\mathcal{B}_0\mathcal{P}_0 x_0=0.
\end{equation}
Let us notice that $v_0=\mathcal{P}_0 x_0\neq 0,$ because $\lambda_0\notin\sigma(\widetilde{\mathcal{A}}).$ Then applying operator $\mathcal{P}_0$ to the left hand side of \eqref{rownanie 2.2} we get $$v_0+\mathcal{P}_0R(\widetilde{\mathcal{A}},\lambda_0)\mathcal{B}_0 v_0=0$$
This equality means that $\lambda_0$ is a point of singularity of the matrix-valued function
$F(\lambda)=I+\mathcal{P}_0R(\widetilde{\mathcal{A}},\lambda)\mathcal{B}_0$ and $v_0$ is a vector 
degenerating $F(\lambda_0)$ from the right.

Let $w_0^\ast$ be a non-zero row such that
\begin{equation}
\label{rownanie2.3}
w_0^\ast F(\lambda_0)=0.
\end{equation}
In order to describe the vector $w_0$, let us find first another form for the  matrix $F(\lambda)$.
For any $v\in\mathbb{C}^n$ we denote
$$
R(\widetilde{\mathcal{A}},\lambda)\mathcal{B}_0v=\binom{y}{\varphi(\cdot)},
\quad\lambda\notin\sigma(\widetilde{\mathcal{A}}),
$$
Then
$$(\widetilde{\mathcal{A}}-\lambda I)\binom{y}{\varphi(\cdot)}=\mathcal{B}_0v=\binom{v}{0}.$$
This gives 
$$
v=-\lambda y,\qquad \frac{\dd\varphi(\theta)}{\dd\theta}-\lambda\varphi(\theta)=0,
$$
and, as  $\binom{y}{\varphi(\cdot)}\in D(\widetilde{\mathcal{A}})$, 
we obtain $y=\varphi(0)-A_{-1}\varphi(-1).$
Therefore
$$
\varphi(\theta)=D\e^{\lambda\theta},\quad D\in\mathbb{C}^n,
$$
and
$$
v=-\lambda(I-\e^{-\lambda}A_{-1})D.
$$
Since the matrix $(I-\e^{-\lambda}A_{-1})$ is invertible $(\lambda\notin\sigma(\widetilde{\mathcal{A}})),$
then we get
$$\varphi(\theta)=-\frac{\e^{\lambda\theta}}{\lambda}(I-\e^{-\lambda}A_{-1})^{-1}v,
$$
and hence 
$$
R(\widetilde{\mathcal{A}},\lambda)\mathcal{B}_0v=
\begin{pmatrix}{y}\cr
{-\dfrac{\e^{\lambda\theta}}{\lambda}(I-\e^{-\lambda}A_{-1})^{-1}v}
\end{pmatrix}.
$$
This formula and \eqref{rownanie1.2} implies
\begin{eqnarray*}
\lefteqn{v+\mathcal{P}_{0}R(\widetilde{\mathcal{A}},\lambda)\mathcal{B}_{0}v=}\\
&& =v-\left(\int^{0}_{-1}\e^{{\lambda}\theta}A_2(\theta)\dd\theta-
\int^{0}_{-1}\frac{\e^{{\lambda}\theta}}{\lambda}A_3(\theta)\dd\theta\right)(I-\e^{-\lambda}A_{-1})^{-1}v
\end{eqnarray*}
and hence
$$F(\lambda)v=\Delta(\lambda)\frac{(I-\e^{-\lambda}A_{-1})^{-1}}{\lambda}v,$$
where
$$
\Delta(\lambda)=\lambda
I-\lambda \e^{-\lambda}A_{-1}-\int^{0}_{-1}\lambda \e^{{\lambda}\theta}A_2(\theta)\dd\theta-
\int^{0}_{-1}\e^{{\lambda}\theta}A_3(\theta)\dd\theta
$$
is the characteristic matrix of equation \eqref{rownanie1.1}.
Thus, the  equality \eqref{rownanie2.3} for $\lambda_{0}\notin\sigma(\widetilde{\mathcal{A}})$ 
is equivalent to 
$$
{w_{0}}^{*}\Delta(\lambda_{0})=0.
$$
Summarizing we obtain the following

\begin{proposition}\label{statement1} Let $\lambda_{0}$ do not belong to $\sigma(\widetilde{\mathcal{A}}).$ 
Then the pair
$(\lambda_{0},w_{0}),\:w_{0}\in\mathbb{C}^n,\:w_{0}\neq {0},$ satisfies  
equation \eqref{rownanie2.3} 
if and only if $\lambda_{0}$ is 
a root of the characteristic equation 
$$
\det\Delta(\lambda)=0
$$
and ${w_{0}}^{*}$ is a row-vector degenerating $\Delta(\lambda_{0})$ from the left, 
i.e. ${w_{0}}^{*}\Delta(\lambda_{0})=0.$
\end{proposition}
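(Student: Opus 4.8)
The plan is to exploit the matrix factorization
$$
F(\lambda)=\frac{1}{\lambda}\,\Delta(\lambda)\,(I-\e^{-\lambda}A_{-1})^{-1},
$$
which follows immediately from the identity $F(\lambda)v=\Delta(\lambda)\frac{(I-\e^{-\lambda}A_{-1})^{-1}}{\lambda}v$ established just above the statement: since that equality holds for every $v\in\mathbb{C}^n$, it is in fact an equality of $n\times n$ matrices. Once this is in hand, the whole proposition should reduce to an elementary linear-algebra observation, provided the right-hand factor is invertible.

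First I would record that, for $\lambda_0\notin\sigma(\widetilde{\mathcal{A}})$, the matrix $M(\lambda_0):=\frac{1}{\lambda_0}(I-\e^{-\lambda_0}A_{-1})^{-1}$ is well defined and invertible. Indeed, $\lambda_0\neq0$ because $0\in\sigma(\widetilde{\mathcal{A}})$ (taking $\varphi$ constant in the second coordinate and $y=(I-A_{-1})\varphi$ produces an eigenvector of $\widetilde{\mathcal{A}}$ for the eigenvalue $0$), so the scalar factor $1/\lambda_0$ makes sense; and the invertibility of $I-\e^{-\lambda_0}A_{-1}$ for $\lambda_0\notin\sigma(\widetilde{\mathcal{A}})$ has already been used in the derivation preceding the statement.

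With $M(\lambda_0)$ invertible, the factorization reads $F(\lambda_0)=\Delta(\lambda_0)M(\lambda_0)$, so for any row-vector $w_0^\ast$ I obtain
$$
w_0^\ast F(\lambda_0)=0\iff w_0^\ast\Delta(\lambda_0)M(\lambda_0)=0\iff w_0^\ast\Delta(\lambda_0)=0,
$$
the last step by right-multiplication with $M(\lambda_0)^{-1}$. This already settles both implications at the level of the two equations. To close the argument I would note that the existence of a \emph{nonzero} $w_0$ annihilating $\Delta(\lambda_0)$ from the left is equivalent to the singularity of $\Delta(\lambda_0)$, i.e.\ to $\det\Delta(\lambda_0)=0$; this bundles together the two asserted conditions and yields the stated equivalence in both directions.

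I do not expect a genuine obstacle here, since the substantive content is entirely carried by the factorization derived before the statement. The only point demanding care is the invertibility of $M(\lambda_0)$, and in particular the observation that $\lambda_0\neq0$; everything beyond that is the routine fact that right-multiplication by an invertible matrix preserves the left null space, together with the standard identification of a nontrivial left null vector with singularity of the matrix.
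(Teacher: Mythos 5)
Your proposal is correct and follows the same route as the paper: the paper's own justification is precisely the factorization $F(\lambda_0)=\Delta(\lambda_0)\,\frac{1}{\lambda_0}(I-\e^{-\lambda_0}A_{-1})^{-1}$ derived just before the statement, with the right factor invertible because $\lambda_0\notin\sigma(\widetilde{\mathcal{A}})$ (so $\lambda_0\neq 0$ and $I-\e^{-\lambda_0}A_{-1}$ is nonsingular). Your explicit remark that $0\in\sigma(\widetilde{\mathcal{A}})$ only makes fully transparent a point the paper leaves implicit.
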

Thus, one can consider the equation $w^*F(\lambda)=0$ as an equation whose roots $(\lambda,w)=(\lambda_{0},w_0)$
describe all eigenvalues and (right) eigenvectors of the characteristic matrix $\Delta(\lambda).$

\section{A component-wise representation  of spectral equation}

We recall spectral properties of operators $\widetilde{\mathcal{A}}$ and
 $\widetilde{\mathcal{A}}^\ast$ obtained in \cite{Rabah_Sklyar_Rezounenko_2003,Rabah_Sklyar_Rezounenko_2005}. 
We will assume that the matrix $A_{-1}$ has 
a simple non-zero eigenvalues $\mu_{1},\mu_{2},\mu_{3},\ldots,\mu_{n}.$ In this case the spectrum 
$\sigma(\widetilde{\mathcal{A}})$ consists of simple eigenvalues which we denote by
$$
\widetilde\lambda^{m}_{k}=\ln\left|\mu_{m}\right|+\ii(\Arg \mu_{m}+2\pi k),\quad m=1,\ldots,n,\ \!k\in\mathbb{Z},
$$
and of eigenvalue $\widetilde{\lambda}_{0}=0.$ First assume $\mu_{m}\neq 1,\ m=1,\ldots,n.$
Then the corresponding to ${\widetilde{\lambda}^{m}_{k}}$ eigenvectors are of the form

\begin{equation}
\label{rownanie3.1}
\widetilde\varphi^{m}_{k}=\binom{0}{{\e^{{\widetilde{\lambda}^{m}_{k}}\theta}}y_{m}},
\quad k\in\mathbb{Z},\:m=1,\ldots,n,
\end{equation}
where $y_{1},\ldots,y_{n}$ are eigenvectors of $A_{-1}$ corresponding to $\mu_{1},\mu_{2},\ldots,\mu_{n}$.
The eigen\-space corresponding
to $\tilde{\lambda}_{0}=0$ is $n$ dimensional and its basis is

\begin{equation}
\label{rownanie3.2}
\widetilde\varphi^{0}_{j}=\binom{(1-\mu_{j})y_{j}}{y_{j}}, \quad j=1,\ldots,n.
\end{equation}
If some $\mu_{m}$ say $\mu_{1}$ equals 1, then $\tilde{\lambda}^{1}_{0}=\tilde{\lambda}_{0}=0.$ In that case 
the eigenspace corresponding to 0 is $(n+1)$-dimensional and its basis consists of $n$ 
eigenvectors $\widetilde\varphi^{0}_{j},\:j=1,\ldots,n,$ given by \eqref{rownanie3.2}, 
and one 
rootvector
$$
\widetilde\varphi^{1}_{0}=\binom{y_{1}}{\theta y_{1}},
\qquad \widetilde{\mathcal{A}}\widetilde\varphi^{1}_{0}
=\widetilde\varphi^{0}_{1}.
$$
All the vectors $\widetilde\varphi^{m}_{k}$ given by \eqref{rownanie3.1}, except of
$\widetilde\varphi^{0}_{1},$ are still eigenvectors of $\widetilde{\mathcal{A}}$ and 
correspond 
to eigenvalues  ${\widetilde \lambda^{m}_{k}}.$ In  both cases 
the 
family $\Phi=\left\{{\widetilde\varphi^{m}_{k}}\right\}\cup\left\{{\widetilde\varphi^{0}_{j}}\right\}$
forms a Riesz basis in the space $M_2.$
Denote by $\Psi=\left\{{\widetilde\psi^{m}_{k}}\right\}\cup\left\{{\widetilde\psi^{0}_{j}}\right\}$ 
the  bi-orthogonal basis to $\Phi$. Then
\begin{equation}
\label{rownanie3.3}
{\widetilde\psi^{m}_{k}}=
\begin{pmatrix}
{z_{m}}/{\overline{\widetilde\lambda_k^m}} \\[1.5ex]
{\e^{-\overline{\widetilde{\lambda}_{k}^{m}}\theta}z_{m}}
\end{pmatrix},
\qquad m=1,\ldots,n, \ k\in\mathbb{Z},\ \widetilde{\lambda}^{1}_{0}\neq 0,
\end{equation}
where $\overline{\widetilde{\lambda}}$ is the complex conjugate of $\widetilde{\lambda}$ 
and

$${\widetilde\psi^{1}_{0}}=\binom{z_{1}}{0}$$ if $\widetilde{\lambda}^{1}_0=0,$
here $z_{m}$ are eigenvectors of matrix $A_{-1}^{*}$ such that 
$$\left\langle y_{i},z_{j}\right\rangle=\delta_{i j}.$$
It is easy to see that $\widetilde\psi^{m}_{k}$ are eigenvectors of the operator $\mathcal{A}^{*}$ corresponding 
to eigenvalues $\overline{\widetilde\lambda_k^m}$.

 Our nearest goal is to rewrite the matrix $F(\lambda_{0})$ in the basis $\Phi$. 
 Let us first rewrite the expression
 $R(\widetilde{\mathcal{A}},\lambda_{0})\mathcal{B}_0$.
  We present $\mathcal{B}_0$ 
as a matrix $\mathcal{B}_0=(b_1^0,b_2^0,\ldots,b_n^0)$ with infinite columns $b_i^0$
 which are vectors from $M_2$ of the form
 $$b_i^0=\binom{e_i}{0}, \qquad i=1,\dots, n,$$ where
 $e_i$ is the canonical basis of $\mathbb{C}^n$. Then
$$
b_i^0=\sum_{k,m}\left\langle b_i^0,\widetilde{\psi}_{k}^{m}\right\rangle\widetilde{\varphi}_{k}^{m}
+\sum_j\left\langle b_i^0,\widetilde{\psi}_{j}^{0}\right\rangle\widetilde{\varphi}_{j}^{0}, \qquad i=1,\dots, n,
$$
and in the case $\mu_m\neq1,\ m=1,\ldots,n$, we have
$$
R(\widetilde{\mathcal{A}},\lambda_0)b_i^0=\sum_{k,m}\frac{1}{\widetilde\lambda_k^m
-\lambda_0}\left\langle b_i^0,\widetilde\psi_{k}^{m}\right\rangle\widetilde\varphi_{k}^{m}
-\sum_j\frac{1}{\lambda}_0\left\langle b_i^0,\widetilde\psi_{j}^{0}\right\rangle\widetilde\varphi_{j}^{0}, 
\quad i=1,\dots, n.
$$
Taking into account the form of eigenvectors \eqref{rownanie3.3} we find that for all $m=1,\dots,n$
$$
\left(\left\langle b_1^0,\widetilde\psi_{k}^{m}\right\rangle,\ldots,\left\langle b_n^0,
\widetilde\psi_{k}^{m}\right\rangle\right)=\frac{1}{\widetilde{\lambda}_k^m}(\left\langle e_1,z_m\right\rangle,\ldots,
\left\langle e_n,z_m\right\rangle)=z_m^{*}/\widetilde{\lambda}_k^m.
$$
So we obtain
\begin{equation}
\label{rownanie3.4}
R(\widetilde{\mathcal{A}},\lambda_0)\mathcal{B}_0
=\sum_{k,m}\frac{1}{\widetilde{\lambda}_k^m
-\lambda_0}\times \frac{1}{\widetilde{\lambda}_k^m}\widetilde\varphi^{m}_{k}z_m^{*}
-\sum_{j}\frac{1}{\lambda_0}\widetilde\varphi_{j}^{0}
\left(\left\langle b_1^0,\widetilde\psi_{j}^{0}\right\rangle,\ldots,\left\langle b_n^0,\widetilde\psi_{j}^{0}
\right\rangle\right),
\end{equation}
if all the numbers $\widetilde\lambda_k^m\neq 0.$ If $\mu_1=1$ and $\widetilde\lambda_0^1=0$ 
then in the sum \eqref{rownanie3.4} the term corresponding to $k=1,\:m=1,$ is replaced by
$$
\left(\frac{1}{\widetilde\lambda_0^1-\lambda_0}\widetilde\varphi_{0}^{1}
-\frac{1}{({\widetilde\lambda_0^1-\lambda_0})^{2}}\widetilde\varphi_{1}^{0}\right)z_1^*.
$$
In what follows, we shall use the notation
\begin{displaymath}
\widetilde\beta_k^m= \left\{ \begin{array}{ll}
\widetilde\lambda_k^m, & \widetilde\lambda_k^m\neq 0,\\
1, & \widetilde\lambda_k^m=0,
\end{array} \right.
\qquad m=1,\dots,n, \ k \in \Zset.
\end{displaymath}
Let us observe that the expression $w_0^*\mathcal{P}_0$ is a linear bounded functional 
on the space $X_{\mathcal{A}}$, i.e. $w_0^*\mathcal{P}_0\in\mathcal{L}(X_{\mathcal{A}},\mathbb{C})$.
The representation of $w_0^*$ in the basis $z_j$ is as follows:
$$w_0^*=\sum_{j}\alpha_jz_j^*.$$
Consider now $n$ functionals $z_j^*\mathcal{P}_0\in\mathcal{L}(X_{\mathcal{A}},\mathbb{C}),\:j=1,\ldots,n$. 
One can decompose them in the basis $\Psi$:
$$
z_j^*\mathcal{P}_0=\sum_{k,m}{p}_{k,m}^j\widetilde{\psi}_k^m+\sum_{i}
{\widetilde{p}}_i^j\widetilde{\psi}_i^0,$$
where
\begin{equation}
\label{rownanie3.5}
\sum_{k}\left|\frac{p_{k,m}^j}{\widetilde\beta_k^m}\right|^{2}<\infty, \qquad m=1,\dots,n.
\end{equation}
In the sequel, we shall assume $\widetilde{p}_i^j=0,\ i=1,\ldots,n$.
This means that from now we consider perturbations $\mathcal{P}_0$ satisfying the condition
\begin{equation}
\label{rownanie3.6}
\int^{0}_{-1}A_3(\theta)\dd\theta=0.
\end{equation}
Then we have
$$
w_0^*\mathcal{P}_0=\sum_{j}{{\alpha}_j} z_j^*\mathcal{P}_0=\sum_{j}
{\alpha}_j\sum_{k,m}{p}_{k,m}^j\widetilde{\psi}_k^m.
$$
From this relation and expression \eqref{rownanie3.4} we obtain the equality
\begin{eqnarray*}
w_0^*\mathcal{P}_0\mathcal{R}(\widetilde{\mathcal{A}},\lambda_0)\mathcal{B}_0&
=&\sum_m\left\langle\sum_k
\frac{1}{\widetilde{\beta}_k^m}\times \frac{1}{\widetilde\lambda_k^m-\lambda_0}\widetilde{\varphi}_k^m,
\sum_{j}\overline{{\alpha}_j}\sum_{k_0,m_0}\overline{\beta}_{k_0,m_0}^j\widetilde{\psi}_{k_0}^{m_0}\right\rangle z_m^*\\
&=&\sum_{j}\alpha_j\sum_{k_0,m_0}
\frac{p_{k,m}^j}{\widetilde{\beta}_k^m}\times\frac{1}{\widetilde\lambda_k^m- \lambda_0}z_m^*.
\end{eqnarray*}
With these notations, the equation \eqref{rownanie2.3} reads  
$$
0=w_0^*\left(I+\mathcal{P}_0\mathcal{R}(\widetilde{\mathcal{A}},
\lambda_0)\mathcal{B}_0\right)=\sum_{m}\alpha_m z_m^*+\sum_{m}
\left(\sum_{j,k}\alpha_j\frac{p_{k,m}^j}{\widetilde{\beta}_k^m}\times
\frac{1}{\widetilde\lambda_k^m-\lambda_0}\right)
z_m^*.$$ 
Thus, the condition for a pair $(\lambda_0,w_0)$ to satisfy the spectral equation can be rewritten 
in the form of the following system of $n$ equations:
\begin{equation}
\label{rownanie3.7}
\alpha_m=-\sum_{\stackrel{k\in\mathbb{Z}}{j=1,\ldots,n}}\alpha_j\left(\frac{p_{k,m}^j}{\widetilde{\beta}_k^m}
\times \frac{1}{\widetilde\lambda_k^m-\lambda_0}\right), \quad m=1,\ldots,n,
\end{equation}
where for any fixed couple $m,j$, the \textbf{needed }$n$-tuple $\left\{p_{k,m}^j\right\}$ satisfies \eqref{rownanie3.5}.
\section{Conditions for spectral assignment}
Now we discuss the following question:

\medskip \noindent
\textit{What conditions must satisfy a sequence of complex numbers $\{\lambda\}$ and a sequence 
of vectors $\{v\}$ in order to be a sequence of roots of the characteristic 
equation $\det\Delta (\lambda)=0$ and a sequence of
 degenerating vectors of the characteristic matrix $\Delta(\lambda)$ of 
equation \eqref{rownanie1.1} respectively for some choice of matrices $A_{-1},A_{2}(\theta), A_{3}(\theta)$?}

\medskip 
We will assume that the corresponding operator $\mathcal{A}$ has simple eigenvalues only. 
Let us remember 
that we assumed earlier that all eigenvalues of matrix $A_{-1}$ are also simple. 
Then one can enumerate 
those eigenvalues as
 $\left\{\lambda_k^m\right\}\cup\left\{\lambda_j^0\right\},\:m,j=1,\ldots,n;\:k\in\mathbb{Z},$
where (see \cite[Theorem 1]{Rabah_Sklyar_Rezounenko_2005}) the sequence $\left\{\lambda_k^m\right\}$ 
satisfies
\begin{equation}
\label{rownanie4.1}
\sum_{k,m}\left|\lambda_k^m-\widetilde{\lambda}_k^m\right|^{2}<\infty.
\end{equation}
Denote by $\left\{\varphi_k^m\right\}\cup\left\{\varphi_j^0\right\},\:m,j=1,\ldots,n;\:k\in\mathbb{Z},$ 
corresponding eigenvectors of $\mathcal{A}.$ 
Then (see \cite[Lemma 13, Theorem 15]{Rabah_Sklyar_Rezounenko_2005}) these vectors form a Riesz basis 
in $M_2$ which is quadratically 
close to the basis
$\left\{\widetilde{\varphi}_k^m\right\}\cup\left\{\widetilde{\varphi}_j^0\right\}$ if 
we assume that the corresponding elements have the same norm. 
Eigenvectors $\left\{\varphi\right\}$ 
has the form
\begin{displaymath}
\mathbf{\varphi}=
\left( \begin{array}{ccc}
v-\e^{-\lambda}A_{-1}v\\
\e^{\lambda\theta}v\\
\end{array} \right)
\end{displaymath}
with $\Delta(\lambda)v=0.$ Therefore the fact that the basis $\left\{\varphi\right\}$ and $\left\{\widetilde{\varphi}\right\}$ 
are quadratically close implies the condition
\begin{equation}
\label{rownanie4.2}
\sum_k{\left\|v_k^m-y_m\right\|}^2<\infty,\quad m=1,\ldots,n,
\end{equation}
where $\Delta(\lambda_k^m)v_k^m=0,\:m=1,\ldots,n;\:k\in\mathbb{Z}$, and $y_m$ 
are eigenvectors of $A_{-1}$ corresponding to $\mu_{m}$ as in (\ref{rownanie3.1}).
If we apply the same arguments for  the dual system
\begin{equation}
\label{rownanie4.3}
\dot{z}(t)=A_{-1}^*\dot{z}(t-1)+\int^{0}_{-1}A_2^*(\theta)\dot{z}(t+\theta)\dd\theta+
\int^{0}_{-1}A_3^*(\theta)z(t+\theta)\dd\theta,
\end{equation}
then we obtain the symmetric condition
\begin{equation}
\label{rownanie4.4}
\sum_k{\left\|w_k^m-z_m\right\|}^2<\infty,\quad m=1,\ldots,n,
\end{equation}
where $(w_k^m) ^*\Delta(\lambda_k^m)=0,\:m=1,\ldots,n;\:k\in\mathbb{Z}.$

Our further goal is to show that conditions \eqref{rownanie4.1}, \eqref{rownanie4.4} or 
\eqref{rownanie4.1}, \eqref{rownanie4.2} are 
almost sufficient for couples of sequences $\left\{\lambda\right\}, \left\{w\right\}$ 
or $\left\{\lambda\right\}, \left\{v\right\}$ to be spectral ones for the system \eqref{rownanie1.1}.

Let us consider a sequence of different complex 
numbers $\left\{\lambda_{k_0}^{m_0}\right\},\:m_0=1,\ldots,n;$ $k_0\in\mathbb{Z},$ 
satisfying \eqref{rownanie4.1}. 
We also 
assume that the index numbering of $\left\{\lambda\right\}$ is such 
that if $\lambda_{k_0}^{m_0}=\widetilde{\lambda}_k^m$ then $m_0=m,\:k_0=k.$ To begin with, however, 
we put $\lambda_{k_0}^{m_0}\neq \widetilde{\lambda}_k^m,$ for all $k,k_0\in\mathbb{Z},\:m, m_0=1,\ldots,n.$

Let now $\left\{\lambda_{k_0}^{m_0}\right\}$ be simple eigenvalues of operator $\mathcal{A}=\widetilde{\mathcal{A}}+\mathcal{B}_0\mathcal{P}_0,$ where $\mathcal{P}_0$ is given by
\eqref{rownanie1.2}, in which matrix $A_3(\theta)$ satisfies \eqref{rownanie3.6}. Then $\Delta(\lambda_{k_0}^{m_0})=0$ and let $\left\{w_{k_0}^{m_0}\right\}$ be a sequence of the left degenerating vectors of $\Delta(\lambda_{k_{0}}^{m_{0}}),$ i.e.
$$(w_{k_0}^{m_0})^*\Delta(\lambda_{k_0}^{m_0})=0.$$

We assume that the sequence $w_{k_{0}}^{m_{0}}$ satisfies \eqref{rownanie4.4}. For all indices $m_0=1,\ldots,n;$ $k_0\in\mathbb{Z},$ consider decompositions

$$(w_{k_{0}}^{m_{0}})^*=\sum_{{j}=1}^n\alpha_{jm_0}^{k_0}z_j^*.$$
Then condition \eqref{rownanie4.4} is equivalent to
\begin{equation}
\label{rownanie4.5}
\sum_{k_{0}}{\left|\alpha_{mm_{0}}^{k_0}\right|}^2<\infty,\quad m\neq m_0,\
\sum_{k_{0}}{\left|\alpha_{mm}^{k_0}-1\right|}^2<\infty, \qquad m, m_0=1,\ldots,n.
\end{equation}
Let us rewrite now relations \eqref{rownanie3.7} for $\lambda_0=\lambda_{k_{0}}^{m_{0}}$ and $w_0=w_{k_{0}}^{m_{0}}.$

We now consider the space $\ell_2$ of infinite sequences (columns) 
indexed as $\left\{a_k\right\}_{k\in\mathbb{Z}}$ with a scalar product 
defined by $\left\langle \left\{a_k\right\},\left\{b_k\right\}\right\rangle=\sum_{k}a_k\overline{b_k}.$
From the relation \eqref{rownanie3.5} we obtain that 
vectors 
$$
p_m^j=-\left\{\frac{\overline{p}_{k,m}^j}{\overline{\widetilde{\beta}}_k^m}\right\}_{k\in\mathbb{Z},\ j,\:m=1,\ldots,n}
$$
belong to $\ell_2$. 
One can also easily see that 
$$
\left\{\frac{1}{\widetilde{\lambda}_k^m-\lambda_{k_{0}}^{m_{0}}}\right\}_{k\in\mathbb{Z}}\in \ell_2,
\quad m,m_0=1,\ldots,n;\ k_0\in\mathbb{Z}.
$$
Then, putting $\lambda_0=\lambda_{k_0}^{m_{0}}$
and $w_0=w_{k_{0}}^{m_{0}}$ in the equations \eqref{rownanie3.7}, we obtain
\begin{equation}
\label{rownanie4.6}
\alpha_{m,m_{0}}^{k_{0}}=
\sum^{n}_{j=1}
\alpha_{j m_{0}}^{k_{0}}\left\langle \left\{\frac{1}{\widetilde\lambda_{k}^{m}
-\lambda_{k_{0}}^{m_{0}}}\right\}, p_{m}^{j}\right\rangle, \qquad m,m_0=1,\ldots,n;\ k_0\in\mathbb{Z}.
\end{equation}
Now we would like to rewrite relation \eqref{rownanie4.6} 
in a vector-matrix abstract form. In order to do that, we introduce a more convenient notation. 
Denote
$$ \alpha_{mm_{0}}=\left\{\alpha_{mm_{0}}^{k_0}\right\}_{{k_0}\in\mathbb{Z}},$$
$m,m_0=1,\ldots,n.$
By $S_{mm_{0}}=\left\{s_{k_{0}k}^{mm_0}\right\}_{k,\:k_0\in\mathbb{Z}},\quad m,m_0=1,\ldots,n,$ we denote infinite matrices with elements
$$s_{k_{0}k}^{mm_0}=\frac{1}{\widetilde{\lambda}_k^m-\lambda_{k_{0}}^{m_{0}}},$$ and by $A_{j m_{0}},\:j,m_0=1,\ldots,n,$ infinite diagonal matrices
$$A_{j m_{0}}=\diag\left\{\alpha_{j m_{0}}^{k_0}\right\}_{k_{0}\in\mathbb{Z}}.$$
With these notations relations \eqref{rownanie4.6} can be rewritten as
\begin{equation}
\label{rownanie4.7}
\sum_{j=1}^n A_{j m_{0}}S_{m m_{0}}p_m^j=\alpha_{mm_0},
\end{equation}
$m,m_0=1,\ldots,n.$
Now let us fix index $m$ and consider $n$ equations \eqref{rownanie4.7} with this index and $m_0=1,\!2,\ldots,n$. 
Consider another infinite diagonal matrix
$$
\Lambda_m={\diag \left\{\widetilde\lambda_k^m-\lambda_k^m\right\}}_{k\in\mathbb{Z}},
$$
and multiply  both sides of the $m$-th  equality \eqref{rownanie4.7} (for $m_0=m$) 
by this matrix from the left. 
This gives the following system of equalities 
\begin{equation}
\label{rownanie4.8}
\left\{
\begin{array}{lcl}
\displaystyle\sum_{j=1}^n A_{j m_{0}}S_{m m_{0}}p_m^j&=&\alpha_{mm_0},\qquad m_0=1,\ldots,n,\ m_0\neq m,\\
\displaystyle\sum_{j=1}^n A_{jm}\Lambda_m S_{mm}p_m^j&=&\Lambda_m\alpha_{mm},
\end{array}
\right.
\end{equation}
where we used the fact that diagonal matrices commute : $\Lambda_mA_{jm}=A_{jm}\Lambda_m$. 
Finally, we  introduce block matrix operators
$$
D_m=\left[\begin{array}{ccccc}A_{11}S_{m1}&\ldots&A_{m1}S_{m1}&\ldots&A_{n1}S_{m1}\\
\vdots&\ddots&\vdots& &\vdots\\
A_{1m}\Lambda_m S_{mm}&\ldots&A_{mm}\Lambda_m S_{mm}&\ldots&A_{nm}\Lambda_mS_{mm}\\
\vdots&&\vdots&\ddots&\vdots\\
A_{1n}S_{m n}&\ldots&A_{mn}S_{mn}&\ldots&A_{n n}S_{mn}\end{array}\right],\:m=1,\ldots,n,
$$ 
and present  \eqref{rownanie4.8} in the form
$$D_m \begin{bmatrix} p_m^1\\
\vdots\\p_m^m\\
\vdots\\p_m^n\end{bmatrix}
=\begin{bmatrix} \alpha_{m1}\\
\vdots\\\Lambda_m\alpha_{mm}\\
\vdots\\
\alpha_{mn}\end{bmatrix}.$$
Let us observe that both vectors $(p_m^1,\ldots,p_m^n)^T$ and 
$(\alpha_{m1}\ldots\Lambda_m\alpha_{mm}\ldots\alpha_{m n})^T$ 
belong to $\ell_2^n=\underbrace{_{}\ell_2\times \ell_2\times\ldots\times \ell_2}_{n \,\textrm{times}}$ 
(see \eqref{rownanie3.5},\eqref{rownanie4.5}).
Therefore the system \eqref{rownanie4.8} is solvable if and only if the 
vector $(\alpha_{m1}\ldots\Lambda_m\alpha_{mm}\ldots\alpha_{m n})^T$ belongs 
to the image of operator $D_m$ as an operator from $\ell_2^n$ to $\ell_2^n.$ In the sequel
 we show that, for all sequences $\left\{\lambda\right\}$ satisfying, \eqref{rownanie4.1} 
and for almost all sequences $\left\{w\right\}$ satisfying \eqref{rownanie4.4}, 
operators $D_m, m=1,\ldots,n,$ are bounded and with bounded inverse operators from $\ell_2^n$ to $\ell_2^n$. 
This means that the spectral assignment problem is solvable.
In the further argument we use the following
\begin{proposition}\label{statement2}
Let  $\left\{\lambda_k\right\}_{k\in\mathbb{Z}}$
be a sequence such that 
 $$
 \sum_{k \in \Zset}{\left|\lambda_k-a+\ii(b+2\pi k)\right|}^2<\infty,
 $$
for some $a, b\in \mathbb{R}$. Then the family $\left\{\e^{\lambda_k t}\right\}_{k\in\mathbb{Z}}$ forms a 
Riesz basis in $L_2(0,1)$.
\end{proposition}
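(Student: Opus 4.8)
The plan is to realize $\{\e^{\lambda_k t}\}$ as a quadratically small perturbation of an explicit Riesz basis and then to invoke a Bari-type stability argument. First I would handle the unperturbed sequence $\widetilde\lambda_k = a-\ii(b+2\pi k)$. Since $\e^{\widetilde\lambda_k t}=\e^{(a-\ii b)t}\,\e^{-2\pi\ii k t}$ and $\{\e^{-2\pi\ii k t}\}_{k\in\Zset}$ is an orthonormal basis of $L_2(0,1)$, the family $\{\e^{\widetilde\lambda_k t}\}$ is the image of that basis under multiplication by the fixed function $m(t)=\e^{(a-\ii b)t}$. As $m$ and $1/m$ are bounded on $[0,1]$, the multiplication operator $M_m$ is an isomorphism of $L_2(0,1)$, so $\{\e^{\widetilde\lambda_k t}\}$ is a Riesz basis, with some lower Riesz bound $A>0$.

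Next I would establish quadratic closeness in the $L_2$-norm. Writing $\delta_k=\lambda_k-\widetilde\lambda_k$ and using $\e^{\lambda_k t}-\e^{\widetilde\lambda_k t}=\e^{\widetilde\lambda_k t}(\e^{\delta_k t}-1)$ together with the elementary bound $|\e^{w}-1|\le |w|\e^{|w|}$, and noting $|\e^{\widetilde\lambda_k t}|=\e^{at}$ on $[0,1]$, one obtains $\|\e^{\lambda_k t}-\e^{\widetilde\lambda_k t}\|_{L_2(0,1)}\le C|\delta_k|$ with $C$ depending only on $a$ and $\sup_k|\delta_k|$ (finite since $\delta_k\to 0$). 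Hence $\sum_k\|\e^{\lambda_k t}-\e^{\widetilde\lambda_k t}\|^2\le C^2\sum_k|\delta_k|^2<\infty$ by hypothesis.

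The Riesz basis property would then follow from a standard stability argument. Let $\{\widetilde\psi_k\}$ be the basis biorthogonal to $\{\e^{\widetilde\lambda_k t}\}$ and set $Tf=f+\sum_k\langle f,\widetilde\psi_k\rangle\bigl(\e^{\lambda_k t}-\e^{\widetilde\lambda_k t}\bigr)$, so that $T\,\e^{\widetilde\lambda_k t}=\e^{\lambda_k t}$. By the previous step $T=I+K$ with $K$ Hilbert--Schmidt, hence Fredholm of index zero; consequently $\{\e^{\lambda_k t}\}=\{T\,\e^{\widetilde\lambda_k t}\}$ is a Riesz basis precisely when $T$ is invertible, and for an index-zero Fredholm operator invertibility is equivalent to surjectivity, that is, to completeness of $\{\e^{\lambda_k t}\}$ in $L_2(0,1)$.

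The main obstacle is exactly this completeness, since the exponents lie at the critical density (one per $2\pi$ along vertical lines), where completeness is not preserved under arbitrary perturbations. I would settle it by the entire-function method: if $h\in L_2(0,1)$ is orthogonal to every $\e^{\lambda_k t}$, then $G(z)=\int_0^1\e^{z t}\overline{h(t)}\,\dd t$ is entire of exponential type, square-integrable on each vertical line (Paley--Wiener), and vanishes on $\{\lambda_k\}$. Comparing $G$ with the sine-type generating function $\e^{z}-\e^{a-\ii b}$, whose zero set is exactly $\{\widetilde\lambda_k\}$, and using that $\{\lambda_k\}$ is an $\ell_2$-perturbation of this zero set, one concludes that $\{\lambda_k\}$ remains a complete interpolating sequence for the Paley--Wiener class, so the only such $G$ is $G\equiv 0$, forcing $h=0$; equivalently this completeness is the classical stability statement for exponential systems (Paley--Wiener, Levinson, Kadec). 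This density-critical step is where all the analytic difficulty is concentrated, the multiplication reduction and the Hilbert--Schmidt estimate being routine.
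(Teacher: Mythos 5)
Your outline is correct in substance, but it is worth noting that the paper does not actually prove this proposition: it is stated as a classical fact with a pointer to Avdonin--Ivanov (Lemma II.4.11) and Paley--Wiener, so any comparison is with that citation rather than with a written argument. Your first three steps are sound and standard: $\{\e^{\widetilde\lambda_k t}\}$ with $\widetilde\lambda_k=a-\ii(b+2\pi k)$ is the image of the orthonormal basis $\{\e^{-2\pi\ii k t}\}$ under the bounded invertible multiplication by $\e^{(a-\ii b)t}$; the quadratic-closeness estimate $\sum_k\|\e^{\lambda_k t}-\e^{\widetilde\lambda_k t}\|^2<\infty$ is immediate from $|\e^{w}-1|\le|w|\e^{|w|}$; and your operator $K$ is indeed Hilbert--Schmidt (write $K=K_0V^*$ with $V$ the isomorphism sending an orthonormal basis to $\{\widetilde\psi_k\}$ and note $\sum_k\|K_0e_k\|^2=\sum_k\|u_k\|^2<\infty$), so $T=I+K$ is Fredholm of index zero and the whole question correctly collapses to completeness of $\{\e^{\lambda_k t}\}$. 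Two caveats. First, pairwise distinctness of the $\lambda_k$ must be assumed (it is implicit in the paper, which applies the proposition to sequences of \emph{different} numbers); otherwise the family is not even minimal although the $\ell_2$ hypothesis can still hold. Second, in the final step you invoke the statement that an $\ell_2$-perturbation of the zeros of $\e^{z}-\e^{a-\ii b}$ remains a \emph{complete interpolating sequence}; that assertion is equivalent to the Riesz basis property itself, so phrased this way the last step assumes the conclusion rather than just the completeness you need. What you actually need there is only the uniqueness-set (Levinson-type) statement, or, more economically, one can bypass the entire-function argument altogether: since $\delta_k\to0$, apply Kadec-type stability to the tail $|k|>N$ (with the elements for $|k|\le N$ left unperturbed) and then use the classical fact that replacing finitely many exponents of a Riesz basis of exponentials by new distinct ones preserves the Riesz basis property. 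Either way the analytic kernel is the same classical stability theory the paper cites, so your proposal is a legitimate, more explicit expansion of the paper's one-line reference rather than a divergent method.
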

There are several ways to prove this classic result (see \cite{Avdonin_Ivanov_1995}). 
It may be  be obtained, for example,  from the Paley-Wiener theorem \cite{Paley-Wiener} and Lemma II.4.11 
\cite{Avdonin_Ivanov_1995}.

Next we prove the following preliminary result.
\begin{lemma}\label{Lemma1}
1. For $m\neq m_o$ operators $S_{mm_{0}}$ are bounded  as operators from 
$\mathcal{L}(\ell_2)$  and have bounded inverses.
2. $\Lambda_mS_{mm}$ is a bounded operator from $\mathcal{L}(\ell_2)$ and has a bounded inverse.
\end{lemma}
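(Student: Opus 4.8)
The plan is to represent each Cauchy-type matrix as a \emph{diagonal operator times the cross-Gram matrix of two exponential Riesz bases}, and then to invoke Proposition \ref{statement2} together with the elementary fact that the cross-Gram matrix of two Riesz bases is an isomorphism of $\ell_2$. The starting observation is that, since $\widetilde\lambda_k^m=\ln|\mu_m|+\ii(\Arg\mu_m+2\pi k)$, one has $\e^{\widetilde\lambda_k^m}=\mu_m$ for \emph{every} $k$. Writing $\delta_{k_0}^{m_0}=\lambda_{k_0}^{m_0}-\widetilde\lambda_{k_0}^{m_0}$, so that $\sum_{k_0}|\delta_{k_0}^{m_0}|^2<\infty$ by \eqref{rownanie4.1} and $\e^{\widetilde\lambda_k^m-\lambda_{k_0}^{m_0}}=\tfrac{\mu_m}{\mu_{m_0}}\e^{-\delta_{k_0}^{m_0}}$, I would compute
$$
\int_0^1\e^{(\widetilde\lambda_k^m-\lambda_{k_0}^{m_0})t}\,\dd t=\frac{\e^{\widetilde\lambda_k^m-\lambda_{k_0}^{m_0}}-1}{\widetilde\lambda_k^m-\lambda_{k_0}^{m_0}}=\left(\frac{\mu_m}{\mu_{m_0}}\e^{-\delta_{k_0}^{m_0}}-1\right)s_{k_0k}^{mm_0}.
$$
The crucial feature is that the prefactor $c_{k_0}:=\tfrac{\mu_m}{\mu_{m_0}}\e^{-\delta_{k_0}^{m_0}}-1$ depends on the row index $k_0$ only. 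Reading the left-hand side as $\langle u_k,g_{k_0}\rangle$ in $L_2(0,1)$ with $u_k(t)=\e^{\widetilde\lambda_k^mt}$ and $g_{k_0}(t)=\e^{-\overline{\lambda_{k_0}^{m_0}}t}$, Proposition \ref{statement2} applies to both sequences $\{\widetilde\lambda_k^m\}$ and $\{-\overline{\lambda_{k_0}^{m_0}}\}$ (their imaginary parts form an arithmetic progression of step $2\pi$ up to an $\ell_2$-perturbation, with bounded real parts), so $\{u_k\}$ and $\{g_{k_0}\}$ are Riesz bases of $L_2(0,1)$. Denoting by $U,G\colon\ell_2\to L_2(0,1)$ the synthesis isomorphisms, the matrix $T=\{\langle u_k,g_{k_0}\rangle\}_{k_0,k}$ equals $G^\ast U$, a composition of isomorphisms, hence bounded with bounded inverse on $\ell_2$.

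For part 1 ($m\neq m_0$) I would then write $S_{mm_0}=\diag\{c_{k_0}^{-1}\}\,T$ and only need that $\diag\{c_{k_0}\}$ is bounded with bounded inverse. Since $\delta_{k_0}^{m_0}\to0$, the sequence $\{c_{k_0}\}$ is bounded and $c_{k_0}\to\frac{\mu_m}{\mu_{m_0}}-1$, which is nonzero \emph{precisely because} $m\neq m_0$ forces $\mu_m\neq\mu_{m_0}$. Boundedness away from $0$ then follows once $c_{k_0}\neq0$ for every $k_0$, and $c_{k_0}=0$ would force $\lambda_{k_0}^{m_0}=\widetilde\lambda_k^m$ for some $k$, contradicting the standing assumption $\lambda_{k_0}^{m_0}\neq\widetilde\lambda_k^m$. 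This yields part 1.

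For part 2 ($m=m_0$) the same identity degenerates: $\frac{\mu_m}{\mu_m}=1$ makes the numerator $d_{k_0}:=\e^{-\delta_{k_0}^m}-1$ vanish to first order, and indeed $S_{mm}$ is itself \emph{unbounded}, its diagonal entries being $1/(\widetilde\lambda_k^m-\lambda_k^m)=-1/\delta_k^m\to\infty$. This is exactly why the factor $\Lambda_m=\diag\{\widetilde\lambda_k^m-\lambda_k^m\}=\diag\{-\delta_{k_0}^m\}$ is inserted. Using $T_{k_0k}=d_{k_0}\,s_{k_0k}^{mm}$ one gets $\Lambda_mS_{mm}=\diag\{-\delta_{k_0}^m/d_{k_0}\}\,T$, where $T$ is again an $\ell_2$-isomorphism by the Gram argument. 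Since $d_{k_0}=-\delta_{k_0}^m+O(|\delta_{k_0}^m|^2)$, the ratio $-\delta_{k_0}^m/d_{k_0}\to1$, and it is finite and nonzero for every $k_0$ because $\delta_{k_0}^m\neq0$ and $d_{k_0}\neq0$ (the latter since $d_{k_0}=0$ would again give $\lambda_{k_0}^m=\widetilde\lambda_k^m$). Hence $\Lambda_mS_{mm}$ is bounded with bounded inverse.

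The hard part is the second statement: the resonance $m=m_0$ collapses the leading term of the Gram entry, and the whole content of the computation is that multiplying by $\Lambda_m$ renormalizes the offending diagonal entries to $1$ \emph{uniformly} in $k_0$. The only delicate point is ensuring that the scalar prefactors ($c_{k_0}$, resp.\ $d_{k_0}$) do not vanish for the finitely many indices where $\delta_{k_0}^{m_0}$ is not yet small; this is guaranteed by the normalization $\lambda_{k_0}^{m_0}\neq\widetilde\lambda_k^m$, which is therefore genuinely essential. One could instead write $\Lambda_mS_{mm}=I+K$ with $K$ Hilbert--Schmidt (via \eqref{rownanie4.1}) and appeal to the Fredholm alternative, but that route still requires a separate injectivity argument; the cross-Gram representation is preferable because it exhibits the inverse explicitly.
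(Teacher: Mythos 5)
Your proposal is correct and follows essentially the same route as the paper: both factor the Cauchy-type matrix as a diagonal operator (with entries $\mu_m\e^{-\lambda_{k_0}^{m_0}}-1$, which is exactly your $c_{k_0}$) times the cross-Gram matrix of the exponential Riesz bases $\{\e^{\widetilde\lambda_k^m t}\}$ and $\{\e^{-\overline{\lambda_{k_0}^{m_0}}t}\}$, invoke Proposition~\ref{statement2}, and treat the resonant case $m=m_0$ by the first-order expansion of $\e^{\widetilde\lambda_k^m-\lambda_k^m}-1$ compensated by $\Lambda_m$. The only cosmetic difference is that you exhibit the cross-Gram matrix directly as $G^\ast U$ with synthesis isomorphisms, whereas the paper identifies it as the matrix of the change-of-basis operator in a biorthogonal basis.
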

\begin{proof} Let $\left\{\varphi_k\right\},\left\{\tilde{\varphi}_k\right\},k\in\mathbb{Z}$, be 
two Riesz basis of a Hilbert space $H$ and let $R$ be a bounded operator with a bounded inverse,
 such that $R\varphi_k=\tilde{\varphi}_k,\:k\in\mathbb{Z}.$

For $f\in H$ we have
$$f=\sum_{j}a_j\varphi_j,\qquad Rf=\sum_{j}a_jR\varphi_j$$

Then
$$
R\varphi_j
=\widetilde{\varphi}_j=\sum_{k}c_{jk}\varphi_k
=\sum_{k}\left\langle \widetilde{\varphi}_j,\psi_k\right\rangle\varphi_k,\quad j\in\mathbb{Z},
$$
where ${\left\{\psi_k\right\}}_{k\in\mathbb{Z}}$ is the bi-orthogonal with respect basis to ${\left\{\varphi_k\right\}}_{k\in\mathbb{Z}}.$ 
Hence
$$
Rf=\sum_{j}a_j\sum_{k}\left\langle \widetilde{\varphi}_j,\psi_k\right\rangle\varphi_k
=\sum_{k}\sum_{j}a_j\left\langle \widetilde{\varphi}_j,\psi_k\right\rangle\varphi_k=\sum_{k}b_k\varphi_k,$$
where $b_k=\sum_{j}a_j\left\langle
\widetilde{\varphi}_j,\psi_k\right\rangle.$

This means  that the infinite matrix $\widehat{R}$ corresponding to $R$ in
 the basis $\left\{\varphi_k\right\}$ is of the form
$$
\widehat{R}=\left\{\widehat{r}_{kj}=
\left\langle \widetilde{\varphi}_j,\psi_k\right\rangle\right\}_{\stackrel{k\in\mathbb{Z}}{j\in\mathbb{Z}}},
$$
where
$$
\widehat{R}\begin{bmatrix} \vdots \cr
a_{-1}\cr
 a_0\cr 
a_1\cr
 \vdots\cr
\end{bmatrix}
=
\begin{bmatrix} \vdots \\ 
b_{-1}\cr
 b_0\cr
b_1\cr
\vdots\cr
\end{bmatrix},\;\{a_j\},\:\{b_j\} \in \ell_2.
$$
Let now $H=L_2(0,1)$ and $\left\{\widetilde{\varphi}_k\right\}_{k\in\mathbb{Z}}$ 
be a Riesz basis of the form $\widetilde{\varphi}_k=\e^{{\widetilde{\lambda}_k^m}t}$ 
for some $m=1,\ldots,n$. 
Let now $\left\{\varphi_k\right\}$ be a Riesz basis which is bi-orthogonal to 
$\left\{\psi_k=\e^{-\overline{\lambda_k^{m_{0}}}t}\right\}_{k\in\mathbb{Z}}$
for some $m_{0}=1,\ldots,n$ 
(the fact that $\left\{\psi_k\right\}_{k\in\mathbb{Z}}$ is a Riesz basis of
 $L_2(0,1)$ 
follows from Proposition~\ref{statement2}).
One has
\begin{eqnarray*}
\left\langle \widetilde{\varphi}_j,\psi_k\right\rangle&=&\int^{0}_{-1}\e^{\widetilde{\lambda}_j^m t}\e^{-{\lambda_k^{m_0}}t}dt=\frac{1}{{\widetilde{\lambda}_j^m}-{\lambda}_k^{m_0}}\left( \e^{\widetilde{\lambda}_j^m-{\lambda_k^{m_0}}}-1\right)\\
&=&\frac{1}{\widetilde{\lambda}_j^m-{\lambda}_k^{m_0}}\left(\mu_m \e^{-{\lambda_k^{m_0}}}-1\right),
\end{eqnarray*}
i.e. $\hat{R}=\left\{\hat{r}_{kj}\right\}_{\stackrel{k\in\mathbb{Z}}{j\in\mathbb{Z}}}$,
 where
$$
\widehat{r}_{k j}=s_{k j}\left(\mu_m {\e^{-\lambda_k^{m_{0}}}}-1\right),\:k,\;j\in\mathbb{Z}.
$$
Thus
$$
\widehat{R}=\varepsilon_{m m_{0}}S_{mm_{0}},
$$
where $\varepsilon_{m m_{0}}$ is the infinite matrix
$$
\varepsilon_{m m_{0}}=
\left[\begin{array}{ccccc}
\ddots & \ldots & \ldots & \ldots & \ldots\\
\vdots & \mu_m \e^{-{\lambda_{-1}^{m_0}}}-1 & 0 & 0 & \vdots \\
\vdots & 0 & \mu_m \e^{-{\lambda_0^{m_0}}}-1 & 0 & \vdots \\
\vdots & 0 & 0 & \mu_m \e^{-{\lambda_1^{m_0}}}-1 & \vdots \\
\ldots & \ldots & \ldots & \ldots & \ddots\\
%
\end{array}\right].
$$
Hence, we have the following alternative:\\
1. If $m_0\neq m$, then the sequence 
$\left\{\mu_m\e^{-\lambda_k^{m_{0}}}-1\right\}_{k\in\mathbb{Z}}$ is bounded 
and separated from $0,$ i.e. $\varepsilon_{mm_{0}} : \ell_2\rightarrow \ell_2$ is a  bounded opeartor 
 with a bounded 
inverse. Hence,
\begin{equation}
\label{rownanie*}
S_{mm_{0}}=\varepsilon_{mm_{0}}^{-1}\hat{R}.
\end{equation}
2. If $m=m_{0}$, then $\mu_m \e^{-\lambda_k^m}\rightarrow1,\:k\rightarrow\infty,$
moreover
\begin{eqnarray*}
\mu_m \e^{-\lambda_k^m}-1 &=&\e^{\widetilde{\lambda}_k^m-\lambda_k^m}-1\\
&=&\left(1+\left(\widetilde{\lambda}_k^m-\lambda_k^m\right)+\ldots+
\frac{\left(\widetilde{\lambda}_k^m-\lambda_k^m\right)^s}{s!}+\ldots\right)-1\\
&=&\left(\widetilde{\lambda}_k^m-\lambda_k^m\right)\left(1+\bar{o}
\left(\widetilde{\lambda}_k^m-\lambda_k^m\right)\right).
\end{eqnarray*}
Therefore,
$$
\varepsilon_{mm}=\Lambda_m Q_m=Q_m\Lambda_m,
$$
where $Q_m=\diag \left(1+\bar{o}(\widetilde{\lambda}_k^m-\lambda_k^m)\right)_{k\in\mathbb{Z}}$ 
has a bounded inverse, so
\begin{equation}
\label{rownanie**}
\Lambda_mS_{mm}=Q_m^{-1}\hat{R}
\end{equation}
From \eqref{rownanie*}, \eqref{rownanie**} it follows that $S_{m m_{0}}, \ m\neq m_0$ 
and  $\Lambda_mS_{mm}$ are bounded and have bounded inverse.
\end{proof}

\begin{remark}
In our previous consideration we assumed implicitly that our sequences 
$\left\{\lambda_{k_{0}}^{m_{0}} \right\}_{k_{0}\in\mathbb{Z}}$ are different from
$\left\{\widetilde\lambda_k^m \right\}_{k\in\mathbb{Z}},$ i.e.
$\widetilde{\lambda}_k^m\neq{\lambda}_{k_{0}}^{m_{0}}$ 
for all $k,k_0\in\mathbb{Z},\ m,m_{0}\in\left\{1,\ldots,\:,n\right\}$ in particular
$\lambda_k^m\neq\widetilde{\lambda}_k^m$ for every $k\in\mathbb{Z},\:m\in\left\{1,\ldots,n\right\}.$
\end{remark}
\bigskip

Now let us allow $\lambda_k^m=\widetilde{\lambda}_k^m$ for some indices $k\in I\subset\mathbb{Z}.$
Note that in this case the operators $S_{m m_{0}},\;m_{0}\neq m$, 
are still well-defined and the operator $\Lambda_m S_{mm}$ can be well-defined as well if we 
define its components as limits of correspondent components when 
$\lambda_k^m\rightarrow\widetilde{\lambda}_k^m,\ k\in I$.
 This means that for $k\in I$ all non-diagonal 
elements of the $k$-th line of $\Lambda_m S_{mm}$ equal $0$ and the diagonal 
elements equal $1.$ Besides, $S_{m m_{0}},\;m_0\neq m$ and $\Lambda_m S_{mm}$ remains bounded and with a 
bounded inverse $\ell_2\rightarrow \ell_2$ operators since formulas  \eqref{rownanie*} \eqref{rownanie**} 
remain true also when $\lambda_k^m=\widetilde{\lambda}_k^m.$ 
Finally if we consider the 
dependence $\Lambda_m S_{mm}$  of the sequence $\left\{\lambda_k^m\right\}_{k\in\mathbb{Z}}$, 
 one can easy prove that
$$
\Lambda_m S_{mm}\left(\left\{{{\lambda'}_k^m}\right\}\right)
\rightarrow\Lambda_m S_{mm}\left(\left\{\lambda_k^m\right\}\right)
$$ 
as 
$$
\sum_{k}{\left|{\lambda'}_k^m-\lambda_k^m\right|}^2\rightarrow 0.
$$
In other words this means that operators $\Lambda_m S_{mm}$ and, as a consequence also its  
inverse operators ${\left(\Lambda_m S_{mm}\right)}^{-1},$ depend continuously of 
sequence ${\left\{\lambda_k^m\right\}}_{k\in\mathbb{Z}}$ on the set 
$$
\left\{\left\{\lambda_k^m\right\}:\sum_{k}{\left|\lambda_k^m-\widetilde{\lambda}_k^m\right|}^2<\infty\right\}.
$$
Now we are ready to prove our main results on the spectral assignment.
\bigskip
\begin{theorem}\label{Theorem1}

Let $\mu_1$, $\mu_2$, $\ldots$, $\mu_n$ be different nonzero complex numbers and $z_1$, $z_2$, $\ldots$, $z_n$
be nonzero n-dimensional linear independent vectors. Denote

$$
\widetilde{\lambda}_k^m=\ln\left|\mu_m\right|+\ii\left(\Arg \mu_m+2\pi k\right),\quad m=1,\ldots,n,\:k\in
\mathbb{Z}.
$$
Let us consider an arbitrary sequence of different complex numbers 
$\left\{\lambda_k^m\right\}_{\stackrel{k\in\mathbb{Z}}{m=1,\ldots,n}}$ such that
$$
\sum_{k}{\left|\lambda_k^m-\widetilde{\lambda}_k^m\right|}^2<\infty,\quad m=1,\ldots,n.
$$
Then there is a small enough $\varepsilon>0$ such for any sequence of nonzero vectors 
$\left\{d_k^m\right\}_{\stackrel{k\in\mathbb{Z}}{m=1,\ldots,n}}$ satisfying
$$
\sum_{k}{\left\|d_k^m-z_m\right\|}^2<\varepsilon, \quad m=1,\ldots,n
$$
one can choose matrices $A_{-1},A_{2}(\theta),A_3(\theta)$ such that for the system \eqref{rownanie1.1}, 
with these matrices, the following two conditions hold: 
\begin{enumerate}
 \item[i)] all the numbers $\left\{\lambda_k^m\right\}$ are 
roots of the characteristic equation $\det\Delta(\lambda_k^m)=0$, 
$k\in\mathbb{Z}, m=1,\dots, n$,
\item[ii)] ${d_k^m}$ are right degenerating vectors for $\Delta(\lambda_k^m)$ :
${d_k^m}^*\Delta(\lambda_k^m)=0,\;m=1,\ldots,n;\ k\in\mathbb{Z}$.
\end{enumerate}
Such a choice is unique if we put the following additional condition on matrix $A_3(\theta)$:
$$
(C)   \qquad \qquad \qquad             \int^{0}_{-1}A_3(\theta)\dd \theta=0.
$$

\end{theorem}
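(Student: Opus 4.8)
The plan is to produce the triple $(A_{-1}, A_2, A_3)$ through the operator factorization $\mathcal{A} = \widetilde{\mathcal{A}} + \mathcal{B}_0\mathcal{P}_0$ and to reduce conditions (i)--(ii) to the solvability of the block systems \eqref{rownanie4.8}. First I would fix $A_{-1}$ from the prescribed asymptotic data: since $z_1, \ldots, z_n$ are linearly independent, let $A_{-1}^*$ be the matrix having the $z_m$ as eigenvectors with eigenvalues $\overline{\mu_m}$; this determines $A_{-1}$, its simple nonzero eigenvalues $\mu_m$, and the biorthogonal eigenvectors $y_m$ with $\langle y_i, z_j\rangle = \delta_{ij}$. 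Thereby the unperturbed operator $\widetilde{\mathcal{A}}$, its spectrum $\{\widetilde{\lambda}_k^m\}$, and the Riesz bases $\Phi, \Psi$ are all fixed, and the only remaining freedom is the perturbation $\mathcal{P}_0 \in \mathcal{L}(X_{\mathcal{A}}, \mathbb{C}^n)$, which by Section~2 corresponds bijectively to $A_2(\cdot), A_3(\cdot)$, the normalization $(C)$ selecting exactly \eqref{rownanie3.6}, i.e. $\widetilde{p}_i^j = 0$.

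Next I would encode the target vectors through their coordinates $(d_k^m)^* = \sum_j \alpha_{jm}^k z_j^*$. As the $z_j$ form a basis, norm equivalence in $\mathbb{C}^n$ gives, for each $m_0$, the estimate $\sum_k\sum_j |\alpha_{jm_0}^k - \delta_{jm_0}|^2 \le C\sum_k \|d_k^{m_0} - z_{m_0}\|^2 < C\varepsilon$; hence the diagonal operators $A_{jm_0} = \diag\{\alpha_{jm_0}^{k_0}\}$ obey $\|A_{jm_0}\|_{\mathcal{L}(\ell_2)} < \sqrt{C\varepsilon}$ for $j\neq m_0$ and $\|A_{m_0m_0} - I\|_{\mathcal{L}(\ell_2)} < \sqrt{C\varepsilon}$. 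By Proposition~\ref{statement1}, conditions (i)--(ii) hold for the constructed system precisely when every pair $(\lambda_k^m, d_k^m)$ satisfies the spectral equation \eqref{rownanie3.7}; written in vector--matrix form (after multiplying the $m_0 = m$ row by $\Lambda_m$, which turns the otherwise unbounded $S_{mm}$ into the bounded $\Lambda_m S_{mm}$ and is harmless where $\lambda_k^m \neq \widetilde{\lambda}_k^m$), this is exactly the block system $D_m(p_m^1, \ldots, p_m^n)^T = (\alpha_{m1}, \ldots, \Lambda_m\alpha_{mm}, \ldots, \alpha_{mn})^T$. Since the right-hand side lies in $\ell_2^n$, the whole problem reduces to inverting $D_m$.

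The bounded invertibility of $D_m$ is the heart of the matter and the step I expect to be the main obstacle. I would write $D_m = D_m^{(0)} + (D_m - D_m^{(0)})$, where $D_m^{(0)}$ results from replacing each $A_{jm_0}$ by $\delta_{jm_0}I$. A direct inspection shows $D_m^{(0)}$ is block-diagonal with diagonal blocks $S_{m1}, \ldots, \Lambda_m S_{mm}, \ldots, S_{mn}$, each bounded with bounded inverse by Lemma~\ref{Lemma1}; hence $D_m^{(0)}$ is boundedly invertible. Every block of $D_m - D_m^{(0)}$ has the form $(A_{jm_0} - \delta_{jm_0}I)S_{mm_0}$ or $(A_{jm} - \delta_{jm}I)\Lambda_m S_{mm}$, a product of one of the small diagonal operators above with a factor bounded by Lemma~\ref{Lemma1}, so $\|D_m - D_m^{(0)}\| \le K\sqrt{\varepsilon}$ with $K$ depending only on $\{\lambda_k^m\}$, $\mu_m$, $z_m$. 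Choosing $\varepsilon$ so small that $K\sqrt{\varepsilon}\,\|(D_m^{(0)})^{-1}\| < 1$ for every $m$, a Neumann series yields $D_m^{-1} \in \mathcal{L}(\ell_2^n)$, and the block systems are uniquely solvable.

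Finally I would assemble the answer and treat coincidences. The solutions $p_m^j = D_m^{-1}(\cdots) \in \ell_2$ satisfy \eqref{rownanie3.5} and define the functionals $z_j^*\mathcal{P}_0 = \sum_{k,m} p_{k,m}^j\widetilde{\psi}_k^m$ (no $\widetilde{\psi}_i^0$ term, i.e. $(C)$), hence a unique $\mathcal{P}_0$ and, via Section~2, the matrices $A_2, A_3$. By construction \eqref{rownanie3.7} holds at every $(\lambda_k^m, d_k^m)$, so Proposition~\ref{statement1} delivers (i)--(ii) at all indices with $\lambda_k^m \notin \sigma(\widetilde{\mathcal{A}})$; for the indices where $\lambda_k^m = \widetilde{\lambda}_k^m$ I would invoke the limiting definitions and the continuous dependence of $\Lambda_m S_{mm}$ and $(\Lambda_m S_{mm})^{-1}$ on $\{\lambda_k^m\}$ recorded just before the theorem, which keep all operators boundedly invertible and let the construction pass to the limit. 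Uniqueness under $(C)$ is then immediate: $(C)$ forces $\widetilde{p}_i^j = 0$, so $\mathcal{P}_0$ is determined by the $p_{k,m}^j$ alone, and these are pinned down uniquely as $D_m^{-1}$ applied to the data.
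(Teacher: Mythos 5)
Your proposal is correct and follows essentially the same route as the paper's own proof: fix $A_{-1}$ by making the $z_m$ left eigenvectors for the $\mu_m$, reduce (i)--(ii) to the block systems $D_m p = (\alpha_{m1},\ldots,\Lambda_m\alpha_{mm},\ldots,\alpha_{mn})^T$, and invert $D_m$ as a small perturbation of the block-diagonal operator $\widehat{D}_m$ whose blocks are handled by Lemma~\ref{Lemma1}, with coincidences $\lambda_k^m=\widetilde{\lambda}_k^m$ treated by the continuity remark. Your explicit Neumann-series estimate merely spells out the perturbation step the paper states as ``$D_m$ tend to $\widehat{D}_m$.''
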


\begin{proof}First we denote by $A_{-1}$ the matrix uniquely defined by the relations:
$$
z_m^*A_{-1}
=\mu_m z_m^*,\:m=1,\ldots,n,
$$ 
and denote by $\left\{y_j\right\}_{j=1,\ldots,n}$ 
the bi-orthogonal basis with respect  to $\left\{z_j\right\}_{j=1,\ldots,n}$ in $\mathbb{C}^n$.
Then the corresponding operator $\widetilde{\mathcal{A}}$, generated by the matrix $A_{-1}$,
has eigenvalues $\widetilde{\lambda}_k^m,\ m=1,\ldots,n,\ k\in\mathbb{Z};\:\widetilde{\lambda_0}=0$ 
and possesses the Riesz basis of eigenvectors of 
$\left\{\widetilde{\varphi}_k^m\right\}_{k\in\mathbb{Z}}\cup\left\{\widetilde{\varphi}_j^0\right\}_{j=1,\ldots,n}.$ 
In the case when all $\mu_m\neq1,\:m=1,\ldots,n$, 
the corresponding  eigenvectors of $\widetilde{\mathcal{A}}$ 
are 
\begin{equation}
\label{rownanie(a)}
\left\{
\begin{array}{l}
\displaystyle{\widetilde{\varphi}}_k^m=
\begin{pmatrix}
0\cr
\e^{\widetilde{\lambda}_k^m\theta}y_m
\end{pmatrix},\quad m=1,\ldots,n;\ k\in\mathbb{Z},\quad 
\widetilde{\mathcal{A}}\widetilde{\varphi}_k^m=\widetilde{\lambda}_k^m\widetilde{\varphi}_k^m,\\[2ex]
\displaystyle\widetilde{\varphi}_j^0=
\begin{pmatrix}
y_j-A_{-1}y_j\cr
y_j
\end{pmatrix},
\quad j=1,\ldots,n;\quad\widetilde{\mathcal{A}}{\widetilde{\varphi}_j^0}=0.
\end{array}
\right.
\end{equation}
In the case  $\mu_1=1$, the vector
$$
\varphi_0^1=
\begin{pmatrix}
  A_{-1}y_j \cr
\theta y_j           
\end{pmatrix}, \quad j=1,\ldots,n;
$$
is a  generalized eigenvector  of $\widetilde{\mathcal{A}}$ corresponding to $\widetilde{\lambda}_0$ and 
the other $\widetilde{\varphi}_k^m,\;\widetilde{\varphi}_j^0$ are given by formula \eqref{rownanie(a)}. 
Let us show that there is a choice of a bounded operator 
$\mathcal{P}_0:X_{\widetilde{\mathcal{A}}}\rightarrow\mathbb{C}^n$ 
(or equivalently a choice of matrices $A_2(\theta),A_3(\theta))$ such that
\begin{equation}
\label{rownanie(a1)}
\lambda_{k_{0}}^{m_{0}}\in\sigma(\widetilde{\mathcal{A}}+\mathcal{B}_0\mathcal{P}_0),
\end{equation}
or equivalently $\det\Delta(\lambda_{k_{0}}^{m_{0}})=0,$ and
\begin{equation}
\label{rownanie(a2)}
d_{k_{0}}^{{m_{0}}*}\Delta(\lambda_{k_{0}}^{m_{0}})=0,\quad m=1,\ldots,n;\ k\in\mathbb{Z}.
\end{equation}
We represent vectors $d_{k_{0}}^{m_{0}}$ in the basis $\left\{{z_j}\right\}_{j=1,\ldots,n},$ namely
$$
d_{k_{0}}^{m_{0}}=\sum_{m=1}^n \alpha_{m m_{0}}^{k_{0}}z_m,\quad k_{0}\in\mathbb{Z};\ m,m_{0}=1,\ldots,n.
$$
With these notations the condition

$$
\sum_{k_{0}}{\|d_{k_{0}}^m-z_m\|}^2<\infty,\quad m=1,\ldots,n
$$
implies
$$
\sum_{k_{0}}|\alpha_{mm_{0}}^{k_{0}}|^2<\infty,\quad m\neq m_{0};
\qquad \sum_{k_{0}}{|\alpha_{mm}^{k_{0}}-1|}^2<\infty,\;m=1,\ldots,n.
$$
and these sums tend to zero as
\begin{equation}
\label{rownanie(b)}
\sum_{k_{0}}{\|d_{k_{0}}^m-z_m\|}^2\rightarrow 0.
\end{equation}
Therefore, under condition \eqref{rownanie(b)} operators $D_m,\;m=1,\ldots,n$ 
tend to block diagonal operators
$$
\widehat{D}_m=\begin{bmatrix}
                       S_{m1} & 0 & \ldots & 0 & \ldots & 0 \cr
                       \vdots & \vdots & \vdots & \vdots &  & \vdots \\
                       0 & 0 & \ldots & \Lambda_mS_{mm} & \ldots & 0 \\
                       \vdots & \vdots & \vdots & \vdots & \vdots & \vdots \\
                       0 & 0 &  \ldots & 0 & \ldots & S_{mn}
                     \end{bmatrix}
$$
which have a bounded inverse due to Lemma~\ref{Lemma1}. This means that for a
small enough $\varepsilon >0$, the  inequality $\sum\limits_{k}{\|d_k^m-z_m\|}^2<\varepsilon$ 
implies that operators $D_m,\;m=1,\ldots,n$ have bounded inverse. If numbers $\{{\lambda}_k^m\}$  
are different from $\{{{\widetilde{\lambda}}_k^m}\}$ the later fact yields the existence 
of a bounded operator {$\mathcal{P}_0:X_{\mathcal{A}}\rightarrow{\mathbb{C}^{n}}$} for which 
the relations \eqref{rownanie(a1)},\eqref{rownanie(a2)} are satisfied. 
Besides, this operator is unique if we require 
additionally: $\mathcal{P}_0\widetilde{\varphi}_j^0=0,\;j=1,\ldots,n$,
 which  is equivalent to condition (C). If we allow 
coincidence $\lambda_k^m=\widetilde{\lambda}_k^m$ for some indices $\{k,m\}\in\mathbb{I}$
 one needs to use continuous dependence of operators $\mathcal{D}$ 
on the sequence $\{\lambda_k^m\}$ (see Remark to Lemma~\ref{Lemma1}).
We approximate 
$\{{\lambda'}_k^m\}$ by $\{{\lambda'}_k^m\}\;(\sum\limits_k{|{\lambda'}_k^m-\lambda_k^m|}^2\rightarrow 0)$ 
such that $\{{\lambda'}_k^m\}\neq\widetilde{\lambda}_k^m$.
Since the conditions \eqref{rownanie(a1)}, \eqref{rownanie(a2)}  
are satisfied for operator $\mathcal{P}_0(\{{\lambda'}_k^m\})$ 
they are also satisfied for $\mathcal{P}_0(\{{\lambda}_k^m\}).$
This completes the proof
\end{proof}
\begin{lemma}\label{Lemma2}
Let $\{\lambda_k^m\}$ be a given sequence such 
that $\sum\limits_k{|\lambda_k^m-\widetilde{{\lambda}}_k^m|}^2<\infty$ 
and $\{\widehat{\alpha}_{mm_{0}}^{k_{0}}\}$ 
be an arbitrary sequence satisfying
$$
\sum\limits_{k_{0}}{|\widehat{\alpha}_{mm_{0}}^{k_{0}}|}^2<\infty,\quad m\neq m_{0};
\qquad \sum\limits_{k_{0}}{|\widehat{\alpha}_{mm}^{k_{0}}-1|}^2<\infty.
$$
Then for any $\varepsilon>0$ and $m=1,\ldots,n$ there is a sequence 
$\{{\alpha}_{mm_{0}}^{k_{0}}\}$ satisfying
$$
\sum\limits_{k_{0}}{|{\alpha}_{mm_{0}}^{k_{0}}-\widehat{\alpha}_{mm_{0}}^{k_{0}}|}^2<\varepsilon
$$
and such that the operator $D_m$ has a bounded inverse.
\end{lemma}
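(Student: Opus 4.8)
The plan is to view the operator $D_m$ associated with a family $\alpha=\{\alpha_{jm_0}^{k_0}\}$ as a compact perturbation of the invertible block-diagonal operator $\widehat{D}_m$ from the proof of Theorem~\ref{Theorem1}, and then to reach an invertible $D_m$ by moving $\hat\alpha$ along an $\ell_2$-small segment. First I would record two structural facts. On one hand, each block of $D_m$ has the form $A_{jm_0}S_{mm_0}$ (with $\Lambda_mS_{mm}$ in the row $m_0=m$), where $A_{jm_0}=\diag\{\alpha_{jm_0}^{k_0}\}$; hence $D_m$ depends affinely on the entries $\alpha_{jm_0}^{k_0}$ and vanishes when all of them vanish. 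On the other hand, writing $\alpha^{\mathrm{id}}$ for the ideal family $\alpha_{jm_0}^{k_0}=\delta_{jm_0}$, the corresponding operator is exactly
\[
D_m(\alpha^{\mathrm{id}})=\widehat{D}_m=\diag\bigl(S_{m1},\ldots,\Lambda_mS_{mm},\ldots,S_{mn}\bigr),
\]
which is bounded and boundedly invertible by Lemma~\ref{Lemma1}.

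Next I would verify that $K(\alpha):=D_m(\alpha)-\widehat{D}_m$ is compact for every $\alpha$ obeying the hypotheses. In each block of $K(\alpha)$ the diagonal factor is either $\diag\{\alpha_{jm_0}^{k_0}\}$ with $j\neq m_0$, or $\diag\{\alpha_{mm}^{k_0}-1\}$ on the main block $(m,m)$, or $\diag\{\alpha_{m_0m_0}^{k_0}-1\}$ on a block $(m_0,m_0)$ with $m_0\neq m$; in all cases the diagonal lies in $\ell_2$ by assumption, so it defines a Hilbert--Schmidt, hence compact, operator. Composing with the bounded factors $S_{mm_0}$, $\Lambda_mS_{mm}$ and summing over the finitely many blocks shows that $K(\alpha)$ is compact. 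Consequently $\widehat{D}_m^{-1}D_m(\alpha)=I+\widehat{D}_m^{-1}K(\alpha)$ is a compact perturbation of the identity, and $D_m(\alpha)$ is invertible if and only if $I+\widehat{D}_m^{-1}K(\alpha)$ is.

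The key step is a connectivity-and-analyticity argument. Consider the segment $\alpha(t)=\hat\alpha+t\bigl(\alpha^{\mathrm{id}}-\hat\alpha\bigr)$. The difference $\alpha^{\mathrm{id}}-\hat\alpha$ lies in $\ell_2$ \emph{precisely} because of the hypotheses $\sum_{k_0}|\hat\alpha_{mm_0}^{k_0}|^2<\infty$ ($m\neq m_0$) and $\sum_{k_0}|\hat\alpha_{mm}^{k_0}-1|^2<\infty$; therefore every $\alpha(t)$ again satisfies these constraints and $\|\alpha(t)-\hat\alpha\|_{\ell_2}=t\,\|\alpha^{\mathrm{id}}-\hat\alpha\|_{\ell_2}\to0$ as $t\to0$. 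Allowing $t$ to range over $\mathbb{C}$, the map $t\mapsto T(t):=-\widehat{D}_m^{-1}K(\alpha(t))$ is an entire family of compact operators (it is affine in $t$ with compact values by the previous step). Since $\alpha(1)=\alpha^{\mathrm{id}}$ yields $K(\alpha^{\mathrm{id}})=0$, the operator $I-T(1)=I$ is invertible.

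Finally I would invoke the analytic Fredholm theorem (Gohberg--Steinberg): for an analytic family of compact operators, $I-T(t)$ is either non-invertible for all $t$ or invertible for all $t$ outside a discrete set. The value $t=1$ excludes the first alternative, so the exceptional set is discrete and there exist arbitrarily small $t>0$ for which $I-T(t)$, and hence $D_m(\alpha(t))$, is invertible. Choosing such a $t$ small enough that $\sum_{k_0}|\alpha_{mm_0}^{k_0}(t)-\hat\alpha_{mm_0}^{k_0}|^2<\varepsilon$ for all $m_0$ and setting the desired sequence to be $\alpha(t)$ proves the lemma. I expect the main obstacle to be the compactness bookkeeping of the second paragraph: one must check that every deviation of $D_m$ from $\widehat{D}_m$ carries an $\ell_2$ diagonal factor, so that $K(\alpha)$ is genuinely compact and the analytic Fredholm machinery applies. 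Once this is secured, the single known invertible point $t=1$ together with analyticity forces invertibility arbitrarily close to $\hat\alpha$.
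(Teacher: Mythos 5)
Your argument is correct as a proof of the Lemma as stated, but it takes a genuinely different route from the paper. You exhibit $D_m(\alpha)$ as $\widehat{D}_m+K(\alpha)$ with $K(\alpha)$ Hilbert--Schmidt (each block carries a diagonal factor whose entries are $\hat\alpha_{jm_0}^{k_0}$ for $j\neq m_0$ or $\hat\alpha_{m_0m_0}^{k_0}-1$, hence in $\ell_2$), and then move along the segment toward the ideal family $\alpha^{\mathrm{id}}$; since $D_m$ is linear in $\alpha$, the family reduces to $I+(1-t)\widehat{D}_m^{-1}K(\hat\alpha)$, so the exceptional $t$'s form a discrete set and invertibility holds for arbitrarily small $t>0$. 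The paper instead fixes a large $N$, keeps $\alpha_{mm_0}^{k_0}=\hat\alpha_{mm_0}^{k_0}$ for $|k_0|>N$ (the tail being close enough to $\delta_{mm_0}$ that the ``tail part'' of $D_m$ is already invertible), and reduces invertibility of $D_m$ to the non-vanishing of the determinant of a finite $(2N+1)n\times(2N+1)n$ matrix $M$, a polynomial in the finitely many free coefficients that is not identically zero; genericity then gives the perturbation. Your approach is cleaner and avoids the finite-dimensional reduction, but it perturbs \emph{every} component of $\hat\alpha$ (by a factor $t$), whereas the paper's construction changes only the components with $|k_0|\le N$. That finite-support refinement is exactly what is recorded in the Remark following the Lemma and is then used in Theorems~\ref{Theorem2} and~\ref{Theorem3} (the condition $d_k^m=\widehat{d}_k^m$ for $|k|>N$), so if you intend your proof to feed into those results you would need an extra step --- e.g.\ apply your analytic-Fredholm argument only to the finitely many coordinates $|k_0|\le N$ after the paper's tail estimate, or otherwise recover the statement that the perturbation can be supported on finitely many indices.
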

\begin{proof}
First, for given $\{\lambda_k^m\}$, let us choose $\varepsilon_{0}>0$ such  that $D_m$ will
 be invertible for 
$$
\sum\limits_{k_{0}}{|{\alpha}_{mm_{0}}^{k_{0}}|}^2<\varepsilon_{0},
\quad m\neq m_{0},\qquad \sum\limits_{k_{0}}{|\alpha_{mm}^{k_{0}}-1|}^2<\varepsilon_{0}.
$$
Then, one can find  a great enough $N$ such that
$$
\sum_{|k_{0}|>N}{|\widehat{\alpha}_{mm_{0}}^{k_{0}}|}^2<\varepsilon_{0},\quad
\sum_{|k_{0}|>N}{|\widehat{\alpha}_{mm}^{k_{0}}-1|}^2<\varepsilon_{0}.
$$
Next we consider the sequences 
$\{{\alpha}_{mm_{0}}^{k_{0}}\}$ for which 
\begin{equation}
\label{rownanie(c)}
{\alpha}_{mm_{0}}^{k_{0}}=\widehat{\alpha}_{mm_{0}}^{k_{0}},\;as\; |k_{0}|>N. 
 \end{equation}

Our goal is to choose the remaining components ${\alpha}_{mm_{0}}^{k_{0}},\;|k_{0}|\leq N$ 
in order to satisfy the requirement of Lemma.
Denote rows  of matrix $D_m$ by $({\ell}_{k_{0}}^{m_{0}})^*,\;\ m_{0}=1,\ldots,n;\ k_{0}\in\mathbb{Z}$ 
and let $q_{k_{0}}^{m_{0}}$ be the correspondent components of the  vector 
$q=D_m p$,  i.e. 
$q_{k_{0}}^{m_{0}}=({\ell}_{k_{0}}^{m_{0}})^*p,\;p\in{\ell}_2^n\; m_{0}=1,\ldots,n;\;k_{0}\in\mathbb{Z}.$
The space $L=\underbrace{\ell_2\times\ldots\times \ell_2}_{n \textrm{ times}}$ 
may be written as $L=L^{1}\oplus L^{2},$
where 
$$
L^{1}=\{q:{q_{k_{0}}^{m_{0}}=0},\ m_0=1,\ldots,n,\ |k_{0}|>N\},
$$
and
$$ 
L^2=\{q:{q_{k_{0}}^{m_{0}}=0},\ |k_{0}|\leq N,\ m_0=1,\ldots,n\}. 
$$
Let $P$ be the orthogonal projector on $L_2$.
Let us observe that the lines $({\ell}_{k_{0}}^{m_{0}})^*$ for $|k_{0}|>N$ do not depend on chosen 
components $\alpha_{mm_{0}}^{k_{0}},\ \left|k_{0}\right|\leq N$ and that if we put  
${\alpha}_{mm_{0}}^{k_{0}}=\delta_{mm_{0}}, |k_{0}|\leq N,\ m,\:m_0=1,\ldots,n$, then 
the operator ${D}_m: L\rightarrow L$ has a bounded inverse. This means that 
for all sequences $\{\alpha_{mm_0}^{k_{0}}\}$ satisfying \eqref{rownanie(c)} we have 
 $PD_{m}L=L^2$ and the
 invertibility of $D_m$ occurs if and only if
\begin{equation}
\label{rownanie(!)}
D_{m}L\supset L_1.
\end{equation}
Let $L^{1'}$ be the subspace 
$$
\left\{p\in L:({\ell}_{k_0}^{m_0})^*p=0 ,\ |k_{0}|>N,\;m_0=1,\ldots,n\right\}
$$
of dimension $(2N+1)n$.
 Denote by ${\ell_k^j}', j=1,\ldots,n;\ \left|k\right|\leq N$ 
a basis of $L^{1'}$. With this notations one can see that
\eqref{rownanie(!)} is equivalent to the invertibility of $(2N+1)n\times(2N+1)n$ 
matrix
$$
M=\left\{({\ell}_{k_{0}}^{m_{0}})^*{\ell_k^j}', \ |k_{0}|\leq N,\;|k|\leq N,
\ m_{0}=1,\ldots,n,\ j=1,\ldots,n\right\}
$$
i.e. $\det M\neq 0$.
The components of $M$ are linear functions of chosen $\alpha_{mm_{0}}^{k_{0}},\ |k_{0}|\leq N$. 
Therefore, its determinant is a polynomial of these coefficients. Besides, $\det M$ is not identical zero
because matrix $D_m$ is invertible if we take  $\alpha_{mm_{0}}^{k_{0}}=\delta_{mm_{0}}$,  i.e.
 $\det M\neq 0.$ This implies that $M$ is invertible almost everywhere in $\mathbb{C}^{2(N+1)n}$. 
This fact completes the proof of Lemma because we can choose $\alpha_{mm_{0}}^{k_{0}},\ |k_{0}|\leq N$,  
in such a way that 
$$
\sum\limits_{|k_{0}|\leq N}\mid\alpha_{mm_{0}}^{k_{0}}-\widehat{\alpha}_{mm_{0}}^{k_{0}}\mid<\varepsilon
$$ 
and the operator $D_m$ will be invertible.
\end{proof}
\begin{remark}
From the proof of Lemma~\ref{Lemma2} it is easy to see that actually sequence $\alpha_{mm_{0}}^{k_{0}}$ 
may differ from $\widehat{\alpha}_{mm_{0}}^{k_{0}}$ only for a finite number of components 
$|k_{0}|\leq N.$
\end{remark}
Due to Lemma~\ref{Lemma2} the formulation of Theorem~\ref{Theorem1} may be generalized 
in the following way.
\begin{theorem}\label{Theorem2}
Let the sequence $\{\lambda_k^m\}_{k\in\mathbb{Z},\;m=1,\ldots,n}$ and vectors $z_m,\;m=1,\ldots,n$
 be chosen according the assumptions of Theorem~\ref{Theorem1}.  Then for any sequence of 
vectors\\ $\{\widehat{d}_k^m\}_{k\in\mathbb{Z},\;m=1,\ldots,n}$ satisfying
$$
\sum\limits_k{\|\widehat{d}_k^m-z_m\|}^2<\infty,\; m=1,\ldots, n,
$$
and  for any $\varepsilon>0,$ there is a sequence $\{d_k^m\}_{k\in\mathbb{Z},\;m=1,\ldots,n}:$
$$\sum\limits_k{\|d_k^m-\widehat{d}_k^m\|}^2<\varepsilon$$
such that, for some choice of matrices $A_{-1}, \ A_2(\theta),\  A_3(\theta)$, satisfying
 $\int^{0}_{-1}A_3(\theta)\dd\theta=0$, 
the conditions i), ii) of Theorem~\ref{Theorem1} 
are verified.  Moreover, $\{d_k^m\}$ may be chosen in such 
a way, that $d_k^m=\widehat{d}_k^m$ for all $|k|>N$ and for some $N\in\mathbb{N}.$
\end{theorem}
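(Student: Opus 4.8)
The plan is to combine the construction from the proof of Theorem~\ref{Theorem1} with the genericity statement of Lemma~\ref{Lemma2}, the only new feature being that we are now permitted to move the degenerating vectors slightly. First I would fix $A_{-1}$ exactly as in Theorem~\ref{Theorem1}, i.e.\ as the unique matrix with $z_m^*A_{-1}=\mu_m z_m^*$, $m=1,\dots,n$, and denote by $\{y_j\}$ the basis biorthogonal to $\{z_j\}$; then $\widetilde{\mathcal A}$ has the prescribed spectrum $\{\widetilde\lambda_k^m\}\cup\{0\}$ and the Riesz basis of eigenvectors described in Section~4. Expanding the given vectors in the fixed basis, $\widehat d_{k_0}^{m_0}=\sum_{m}\widehat\alpha_{mm_0}^{k_0}z_m$, the hypothesis $\sum_k\|\widehat d_k^m-z_m\|^2<\infty$ is, because $\{z_j\}$ is a fixed finite basis of $\Cset^n$, equivalent to
$$
\sum_{k_0}|\widehat\alpha_{mm_0}^{k_0}|^2<\infty\ (m\neq m_0),\qquad \sum_{k_0}|\widehat\alpha_{mm}^{k_0}-1|^2<\infty,
$$
which are precisely the standing assumptions of Lemma~\ref{Lemma2}.

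Next I would apply Lemma~\ref{Lemma2}. For a parameter $\varepsilon'>0$ to be fixed later it produces a perturbed family $\{\alpha_{mm_0}^{k_0}\}$ with $\sum_{k_0}|\alpha_{mm_0}^{k_0}-\widehat\alpha_{mm_0}^{k_0}|^2<\varepsilon'$ which, by the Remark following Lemma~\ref{Lemma2}, differs from $\{\widehat\alpha_{mm_0}^{k_0}\}$ only in the finitely many components with $|k_0|\le N$, and for which the operator $D_m$ of \eqref{rownanie4.8} is boundedly invertible. The one point that must be checked carefully is that a single such perturbation can be made to work for all $m=1,\dots,n$ simultaneously: since every $D_m$ depends on the whole coefficient family $\{\alpha_{jm_0}\}$ (only the matrices $S_{mm_0}$ and $\Lambda_m$ carry the index $m$), I would invoke the determinant computation inside the proof of Lemma~\ref{Lemma2}, by which the set of perturbations of the finitely many components $|k_0|\le N$ making $D_m$ singular is a proper algebraic subvariety of a finite-dimensional space. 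The union of these $n$ subvarieties is again proper, so one may choose a perturbation, arbitrarily close to $\{\widehat\alpha_{mm_0}^{k_0}\}$, rendering $D_1,\dots,D_n$ simultaneously invertible.

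With this family in hand I would set $d_{k_0}^{m_0}=\sum_{m}\alpha_{mm_0}^{k_0}z_m$. From $\|d_{k_0}^{m_0}-\widehat d_{k_0}^{m_0}\|\le(\max_m\|z_m\|)\sum_m|\alpha_{mm_0}^{k_0}-\widehat\alpha_{mm_0}^{k_0}|$ one gets $\sum_{k_0}\|d_{k_0}^{m_0}-\widehat d_{k_0}^{m_0}\|^2\le C\varepsilon'$ with $C$ depending only on the fixed vectors $z_m$; choosing $\varepsilon'$ small enough yields $\sum_k\|d_k^m-\widehat d_k^m\|^2<\varepsilon$, and because $\alpha_{mm_0}^{k_0}=\widehat\alpha_{mm_0}^{k_0}$ for $|k_0|>N$ we also obtain $d_k^m=\widehat d_k^m$ for all $|k|>N$, which is the ``moreover'' clause. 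Finally, invertibility of each $D_m$ lets me solve the system \eqref{rownanie4.8} for $p_m^j\in\ell_2$; these recover the functionals $z_j^*\mathcal P_0$ and hence a bounded operator $\mathcal P_0$, equivalently matrices $A_2(\theta),A_3(\theta)$, with the normalization $\widetilde p_i^j=0$ (condition (C), i.e.\ $\int_{-1}^{0}A_3(\theta)\dd\theta=0$) enforcing uniqueness as in Theorem~\ref{Theorem1}. By the chain of equivalences of Sections~3--4 (Proposition~\ref{statement1} together with the component-wise form of the spectral equation), the system \eqref{rownanie1.1} built from $A_{-1},A_2,A_3$ then satisfies $\det\Delta(\lambda_k^m)=0$ and has $d_k^m$ as left degenerating vectors, i.e.\ conditions i) and ii).

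I expect the genuine obstacle to be exactly the simultaneous invertibility of $D_1,\dots,D_n$ for one common perturbation, since Lemma~\ref{Lemma2} is phrased for a single $D_m$ whereas $D_m$ in fact depends on all coefficients $\alpha_{jm_0}$; the proper-subvariety argument above is what upgrades the single-index lemma to the full problem. Everything else — the passage between $\sum\|d-\widehat d\|^2$ and $\sum|\alpha-\widehat\alpha|^2$ and the reconstruction of $\mathcal P_0$ from the solved $p_m^j$ — is routine once Theorem~\ref{Theorem1} is available.
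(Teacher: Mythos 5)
Your proposal is correct and follows essentially the same route as the paper, which derives Theorem~\ref{Theorem2} directly from Lemma~\ref{Lemma2} combined with the construction of $\mathcal{P}_0$ from Theorem~\ref{Theorem1}. Your explicit observation that each $D_m$ depends on the whole coefficient family and that simultaneous invertibility of $D_1,\dots,D_n$ follows because the union of the $n$ proper algebraic subvarieties where $\det M$ vanishes is still proper is a welcome clarification of a point the paper leaves implicit in the phrase ``invertible almost everywhere.''
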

And then we obtain the following result.
\begin{theorem}\label{Theorem3}
Let the sequences 
$\{\lambda_k^m\}_{k\in\mathbb{Z},\;m=1,\ldots,n}$ and $\{\widehat{d}_k^m\}_{k\in\mathbb{Z},\;m=1,\ldots,n}$
 be from Theorem~\ref{Theorem2}.
Let, in addition, the complex numbers $\lambda_j^0,\;j=1,\ldots,n$ be different from each other 
and different from $\lambda_k^m$ and let $d_j^0,\;j=1,\ldots,n$ be linear independent vectors.
Then, for any $\varepsilon>0$ there exist $N>0,$ a sequence ${\{d_k^m\}}_{k\in\mathbb{Z},\;m=1,\ldots,n}:$
$$\sum\limits_k{\|d_k^m-\widehat{d}_k^m\|}^2<\varepsilon,\;d_k^m=\widehat{d}_k^m,\;\mid k\mid>N,\;m=1,\ldots,n$$
and a choice of matrices $A_{-1}, A_2(\theta), A_3(\theta)$ such that:
\begin{enumerate}
\item[i)] all the numbers $\{\lambda_k^m\}_{k\in\mathbb{Z},\;m=1,\ldots,n}\cup\{\lambda_j^0\}_{j=1.\ldots,n}$ 
are roots of the characteristic equation $\det \Delta(\lambda)=0$;
\item[ii)] $d_k^{m^*}\Delta(\lambda_k^m)=0,\;m=1,\ldots,n,\;k\in\mathbb{Z}$ and $d_j^{0^*}\Delta(\lambda_j^0)=0.$
\end{enumerate}
\end{theorem}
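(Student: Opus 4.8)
The plan is to superimpose a finite-dimensional assignment of the $n$ ``zero-type'' eigenvalues $\lambda_1^0,\dots,\lambda_n^0$ onto the infinite assignment already achieved in Theorem~\ref{Theorem2}. The decisive point is that we no longer impose condition~(C): dropping $\int_{-1}^0 A_3(\theta)\dd\theta=0$ restores the components $\widetilde p_i^j$ of the functionals $z_j^*\mathcal{P}_0$ along the vectors $\widetilde\psi_i^0$ biorthogonal to the null eigenspace, i.e.\ exactly $n^2$ new scalar parameters. These are precisely the degrees of freedom that split the $n$-fold eigenvalue $0$ of $\widetilde{\mathcal{A}}$ into the prescribed simple values $\lambda_j^0$.

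First I would write the spectral equation $w^*\bigl(I+\mathcal{P}_0R(\widetilde{\mathcal{A}},\lambda)\mathcal{B}_0\bigr)=0$ while retaining the second sum of \eqref{rownanie3.4}, the one carrying the factor $1/\lambda$ and the vectors $\widetilde\varphi_i^0$. Evaluating at $\lambda=\lambda_k^m$, $w=d_k^m$ reproduces the equations \eqref{rownanie4.7} of Theorem~\ref{Theorem1} plus a correction that is linear in the $\widetilde p_i^j$ and weighted by $1/\lambda_k^m$; since $\lambda_k^m\to\infty$ this correction sends the finite vector $(\widetilde p_i^j)$ into $\ell_2^n$ and hence defines a bounded coupling operator $B$. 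Evaluating instead at $\lambda=\lambda_{j_0}^0$, $w=d_{j_0}^0$ yields, for each $j_0$, $n$ genuinely new scalar equations; here the weight $1/\lambda_{j_0}^0$ is large (the $\lambda_{j_0}^0$ lie near the removed point $0$), so the $\widetilde p_i^j$ enter dominantly, while the $p_{k,m}^j$ enter through the convergent series $\sum_{k,m}p_{k,m}^j/\bigl(\widetilde\beta_k^m(\widetilde\lambda_k^m-\lambda_{j_0}^0)\bigr)$, which is finite because the $\widetilde\lambda_k^m$ stay away from $0$ and \eqref{rownanie3.5} holds, defining a bounded coupling operator $C$. The whole system thus takes the block form $\left(\begin{smallmatrix}D&B\\ C&E\end{smallmatrix}\right)\binom{p}{\widetilde p}=\binom{\alpha}{\beta}$, where $D=\diag(D_1,\dots,D_n)$ is the operator of Theorem~\ref{Theorem1}, $E$ is the finite $n^2\times n^2$ block governing the $\lambda_j^0$, and the data $\alpha,\beta$ are read off from $\{d_k^m\}$ and $\{d_j^0\}$.

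Next I would invoke Theorem~\ref{Theorem2}: after an arbitrarily small, finitely supported perturbation of $\{\widehat d_k^m\}$ the operator $D$ is boundedly invertible, so the block system can be solved by a Schur complement. Eliminating $p=D^{-1}(\alpha-B\widetilde p)$ leaves the finite system $(E-CD^{-1}B)\widetilde p=\beta-CD^{-1}\alpha$, and the whole problem reduces to invertibility of the $n^2\times n^2$ matrix $E-CD^{-1}B$. This is the heart of the matter. I expect the finite block to be nondegenerate for two reasons: the hypothesis that $d_1^0,\dots,d_n^0$ are linearly independent, which for the null eigenspace plays exactly the role that linear independence of $z_1,\dots,z_n$ plays in Theorem~\ref{Theorem1}; and the distinctness and nonvanishing of the $\lambda_j^0$, which keep the weights $1/\lambda_{j_0}^0$ finite and arrange the off-diagonal entries in a Cauchy-type (nonsingular) pattern through the matrices $S$.

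Finally, to cover degenerate configurations I would argue as in Lemma~\ref{Lemma2}. The map $E-CD^{-1}B$ depends analytically on the finitely many adjustable coordinates $\alpha_{mm_0}^{k_0}$, $|k_0|\le N$ (the ones we may change by perturbing $\widehat d_k^m$), and its determinant is not identically zero: at a base point where the infinite and finite blocks decouple, $D$ is invertible by Theorem~\ref{Theorem2} and $E$ is invertible by linear independence of the $d_j^0$. Hence the determinant vanishes only on a proper (measure-zero) analytic subset, and we may choose $\{d_k^m\}$ off that subset with $\sum_k\|d_k^m-\widehat d_k^m\|^2<\varepsilon$ and $d_k^m=\widehat d_k^m$ for $|k|>N$. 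Solving the block system then returns $p_{k,m}^j$ and $\widetilde p_i^j$, hence the functionals $z_j^*\mathcal{P}_0$, the operator $\mathcal{P}_0$, and matrices $A_{-1},A_2(\theta),A_3(\theta)$; note that now $\int_{-1}^0A_3(\theta)\dd\theta\neq0$ in general, which is exactly what is required to displace the zero eigenvalues. The main obstacle is controlling the coupling $CD^{-1}B$ near the singularity at $0$ and certifying that the finite Schur complement remains invertible; the distinctness of the $\lambda_j^0$ and the linear independence of the $d_j^0$, combined with the genericity argument, are what resolve it.
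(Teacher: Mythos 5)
Your route is genuinely different from the paper's, and it stalls exactly where you yourself locate the heart of the matter: the invertibility of the finite Schur complement $E-CD^{-1}B$. Neither of the two reasons you offer for it survives scrutiny. First, nothing in the hypotheses places the $\lambda_j^0$ near $0$; they are only required to be distinct from each other and from the $\lambda_k^m$, so the weights $1/\lambda_{j_0}^0$ need not be large and the $\widetilde p_i^j$ need not enter ``dominantly'' in $E$. Second, the genericity argument modelled on Lemma~\ref{Lemma2} has no admissible base point: the only parameters you may move are the finitely many $\alpha_{mm_0}^{k_0}$, $|k_0|\le N$, obtained by perturbing $\widehat d_k^m$; but $E$ and $C$ do not depend on these at all (they are built from the fixed data $\lambda_j^0$, $d_j^0$ and from the tails of the series), and $B$ keeps all of its entries with $|k_0|>N$, so there is no reachable configuration in which the blocks decouple and the determinant is visibly nonzero. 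Thus ``$\det(E-CD^{-1}B)\not\equiv 0$ as a function of the perturbable parameters'' is asserted rather than proved, and without it the construction collapses. (There is also a minor breakdown of the resolvent formalism if some $\lambda_j^0$ equals $0$ or some $\widetilde\lambda_k^m$, since Proposition~\ref{statement1} requires $\lambda_0\notin\sigma(\widetilde{\mathcal A})$.)

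The paper avoids all of this with a multiplicative rather than additive device. It defines the matrix $C$ by $d_j^{0*}C=\lambda_j^0 d_j^{0*}$ (this is where the distinctness of the $\lambda_j^0$ and the linear independence of the $d_j^0$ are consumed), pre-transforms the data by $\widehat f_k^{m*}=\widehat d_k^{m*}\bigl(I-\tfrac{1}{\lambda_k^m}C\bigr)^{-1}$ --- still quadratically close to $z_m$ because $\bigl(I-\tfrac{1}{\lambda_k^m}C\bigr)^{-1}\to I$ --- applies Theorem~\ref{Theorem2} to obtain $\widehat\Delta$, and then sets $\Delta(\lambda)=\bigl(I-\tfrac{1}{\lambda}C\bigr)\widehat\Delta(\lambda)$, verifying explicitly that this product is again the characteristic matrix of a system of the form \eqref{rownanie1.1} with modified $A_2,A_3$ (which, as you correctly anticipate, no longer satisfy condition (C)). The factorization makes conditions i) and ii) immediate and requires no invertibility statement beyond Lemmas~\ref{Lemma1} and~\ref{Lemma2}. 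To salvage your approach you would need an independent proof that the finite block is nonsingular, or generically so with respect to data you are actually permitted to vary.
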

\begin{proof}
Denote by $C$ a $(n\times n)$ matrix uniquely defined by the equalities:
$$ 
d_j^{0^*}C=\lambda_j^0d_j^{0^*},\;j=1,\ldots,n,
$$
and let us put 
$$
\widehat{f}_k^{m^*}=\widehat{d}_k^{m^*}(I-\frac{1}{\lambda_k^m}C)^{-1},\;k\in\mathbb{Z} ,\;m=1,\ldots,n
$$
if
$\lambda_k^m\neq 0$ 
and 
$$
\widehat{f}_k^{m^*}=\widehat{d}_k^{m^*}C^{-1}
$$ 
for $\lambda_k^m=0.$

It is easy to see that the sequences $\{\widehat{f}_k^m\}$ are also quadratically closed to $z_m$:
$$
\sum_k\|\widehat{f}_k^m-z_m\|^2<\infty.
$$
Therefore,  due to Theorem~\ref{Theorem2}, 
for any $\varepsilon>0$, there exist matrices 
$A_{-1},\widehat{A}_2(\theta),\widehat{A}_3(\theta) \\(\int^{0}_{-1}\widehat{A}_3(\theta)\dd\theta=0)$, a 
number $N>0$ and a sequence of vectors $\{f_k^m\}_{k\in\mathbb{Z},\;m=1,\ldots,n}$:
$$
\sum\limits_k\|f_k^m-\widehat{f}_k^m\|^2<\frac{\varepsilon}{M^2},\qquad f_k^m=\widehat{f}_k^m,\;\mid k\mid>N
$$  
such that
$$
f_k^{m*}\widehat{\Delta}(\lambda_k^m)=0,\; k\in\mathbb{Z},\;m=1,\ldots,n;
$$
where $M=\sup\{\|I-\frac{1}{\lambda_k^m}C\|,\;\lambda_k^m\neq0;\;\|C\|\}$ and 
$$
\widehat{\Delta}(\lambda)=
\lambda I-\lambda \e^{-\lambda}A_{-1}+\int^{0}_{-1}\lambda \e^{\lambda\theta}\widehat{A}_2(\theta)\dd\theta+
\int^{0}_{-1}\e^{\lambda\theta}\widehat{A}_3(\theta)\dd\theta.
$$
Now, let us put
$$
\Delta(\lambda)=(I-\frac{1}{\lambda}C)\widehat{\Delta}(\lambda),\;\lambda\neq 0.
$$
One can note that $\widehat{\Delta}(0)=0$ and the function 
$\widehat{\Delta}_1(\lambda)=\frac{1}{\lambda}\widehat{\Delta}(\lambda)$
 may be extended to zero by the formula
$$
\widehat{\Delta}_1(0)=I-A_{-1}+\int^{0}_{-1}\widehat{A}_2(\theta)\dd\theta+
\int^{0}_{-1}\lim_{\lambda\rightarrow 0}{\frac{\e^{\lambda\theta}-1}{\lambda}}\widehat{A}_3(\theta)\dd\theta 
$$
Then,  one can define
$$\Delta(0)=-C\widehat{\Delta}_1(0).$$
Let us observe that $\Delta(\lambda)$ can be written as
$$
\Delta(\lambda)=\lambda I-\lambda \e^{-\lambda}A_{-1}+
\int^{0}_{-1}\lambda \e^{\lambda\theta}\widehat{A}_2(\theta)\dd\theta+
\int^{0}_{-1}\e^{\lambda\theta}\widehat{A}_3(\theta)\dd\theta-C+\e^{-\lambda}CA_{-1}
$$
$$
\hskip -15ex
-\int^{0}_{-1}\e^{\lambda\theta}C\widehat{A}_2(\theta)\dd\theta+\int^{0}_{-1}\e^{\lambda\theta}
\int^{\theta}_{-1}C\widehat{A}_3(\tau)\dd\tau\dd\theta,
$$ 
$$
\hskip -12ex =\lambda I-\lambda \e^{-\lambda}A_{-1}+
\int^{0}_{-1}\lambda \e^{\lambda\theta}{A}_2(\theta)\dd\theta+
\int^{0}_{-1}\e^{\lambda\theta}{A}_3(\theta)\dd\theta
$$ 
for $A_2(\theta)$ and $A_3(\theta)$ given by
\begin{eqnarray*}
A_2(\theta)&=&\widehat{A}_2(\theta)-(\theta+1)C-\theta CA_{-1},\cr
A_3(\theta)&=&\widehat{A}_3(\theta)+\int^{\theta}_{-1}C\widehat{A}_3(\tau)\dd\tau-C-CA_{-1}
\end{eqnarray*}
It remains to note that, with this choice of matrices, the conditions i), ii) are satisfied. 
Indeed:
$$
\det\Delta(\lambda)=\det(I-\frac{1}{\lambda}C)\det\widehat{\Delta}(\lambda),
$$
so all the numbers $\{\lambda_k^m\}$ and $\{\lambda\}_j^0$ are roots of the characteristic equation 
and
$$
\begin{array}{rcl}
d_k^{m*}\Delta(\lambda_k^m)&=&f_k^{m*}\widehat{\Delta}(\lambda_k^m),\qquad k\in\mathbb{Z},\ m=1,\ldots,n,\\
&=&0,\\
d_j^{0*}\Delta(\lambda_j^0)&=&0,\qquad j=1,\ldots, m
\end{array}
$$
and finally, for all $m=1,\dots,n$, we have
$$
\sum\limits_k{\|d_k^m-\widehat{d}_k^m\|}^2\leq\sum\limits_{k,m}M^2\|f_k^m-\widehat{f}_k^m\|<\varepsilon.
$$
This completes the proof.
\end{proof}
\section{Conclusion}
We give here some conditions on sets of complex numbers $\{\lambda\}$ and $n$-vectors $\{d\}$ 
such that they form a spectral set for a neutral type systems. This is a first etap for solving
vector moment problems using the exact controllability properties of a neutral type system related
to the given moment problem.
 
\end{document}